\documentclass[11pt]{article}
\usepackage[margin=1in]{geometry}
\usepackage{graphicx}
\usepackage{multirow}
\usepackage{amsmath,amssymb,amsfonts}
\usepackage{amsthm}
\usepackage{mathrsfs}
\usepackage{xcolor}
\usepackage{textcomp}
\usepackage{booktabs}
\usepackage{makecell}
\usepackage{authblk}

\theoremstyle{plain}
\newtheorem{theorem}{Theorem}[section]

\newtheorem{lemma}{Lemma}[section]
\newtheorem{corollary}[theorem]{Corollary}
\theoremstyle{definition}
\newtheorem{Definition}{Definition}[section]
\theoremstyle{remark}

\newtheorem{remark}{Remark}[section]
\numberwithin{equation}{section}

\begin{document}

\title{On the sum of odd minimal excludants over overpartitions}

\author[1]{Veena V S\thanks{Corresponding author. Email: veenavsmath@gmail.com}}
\author[2]{S N Fathima\thanks{Email: dr.fathima.sn@gmail.com}}
\affil[1]{Department of Data Analytics and Mathematical Sciences, Jain (Deemed-to-be University), Kochi, Kerala, India}
\affil[2]{Department of Mathematics, Pondicherry University, Puducherry, India}
\date{}

\maketitle

\begin{abstract}
Andrews and Newman introduced the minimal excludant $\mathrm{mex}(\lambda)$
of an integer partition $\lambda$ and studied the summatory function
$\sigma\mathrm{mex}(n)$, and Baruah et al. refined this to the
odd- and even-restricted functions $\sigma_o\mathrm{mex}(n)$ and
$\sigma_e\mathrm{mex}(n)$. In this paper, we introduce and study the
overpartition analogue $\overline{\sigma_o\mathrm{mex}}(n)$, defined as
the sum of odd minimal excludants over all overpartitions of $n$. We
first derive the exact generating function for
$\overline{\sigma_o\mathrm{mex}}(n)$, and relate it to the bivariate
generating function of Aricheta and Donato for the overpartition
minimal excludant. Using elementary $q$-series arguments, together with
a weight-one eta-quotient identity for $\varphi(q)^2$ verified via the
Gordon--Hughes--Newman--Ligozat criterion, we establish an infinite
family of congruences satisfied by $\overline{\sigma_o\mathrm{mex}}(n)$.
Consequently, we obtain that $\overline{\sigma_o\mathrm{mex}}(0)=1$ and,
for $n\ge1$, $\overline{\sigma_o\mathrm{mex}}(n) \equiv 0 \pmod{4}$ if
and only if $n$ is a perfect square. We further obtain congruences for
the partial sums and self-convolution of $\overline{\sigma_o\mathrm{mex}}(n)$;
in particular, an infinite family of congruences modulo $8$ for the
self-convolution, expressed in terms of the divisor functions $d_1(n)$
and $d_3(n)$. We conclude the paper by establishing a Hardy--Ramanujan-type
asymptotic formula for $\overline{\sigma_o\mathrm{mex}}(n)$ via the
Wright circle method.
\end{abstract}

\noindent\textbf{Keywords:} Overpartitions, Minimal excludant, Mex functions, Congruences, Divisor function, Eta-quotients.

\noindent\textbf{MSC Classification:} 05A17, 11A25, 11F11, 11P83.

\section{Introduction}

A partition of $n\in \mathbb{N}$, is a finite non increasing seuence of positive integers
\[
\lambda_1 \geq \lambda_2 \geq \cdots \geq \lambda_m > 0,
\qquad \textnormal{such that } \qquad
\sum_{j=1}^{m} \lambda_j = n,
\]
where the $\lambda_j$'s are called the parts of the partition. Here and throughout this paper, we use the notation
\begin{align*}
 f_k := (q^k;q^k)_\infty= \prod_{n=1}^{\infty}(1-q^{nk}), \quad |q|<1.
\end{align*}

In 2004, Corteel and Lovejoy \cite{CL2004}, revisited the combinatorial object known as overpatitions. An overpartition of a positive integer $n$, is defined as a non-increasing sequence of positive integers summing to $n$ in which the first occurrence of each integer may be overlined. For example, the eight overpartitions
of $3$ are
\[
3,\ \overline{3},\ 2+1,\ \overline{2}+1,\
2+\overline{1},\ \overline{2}+\overline{1},\
1+1+1,\ \overline{1}+1+1.
\]
The number of overpartitions of $n$ is denoted by
$\overline{p}(n)$, and its generating function is given by
\begin{equation}
	\sum_{n=0}^{\infty} \overline{p}(n)\, q^n
	= \frac{f_2}{f_1^2}.
\end{equation}

For a set $S$ of positive integers, a minimal excludant of $S$ is the least positive integer that is not part of $S$. Andrews and Newman \cite{AN2019} introduced this in partition theory. They defined the minimal excludant of an integer partition $\lambda$, denoted by $\mathrm{mex}(\lambda)$ as the least positive integer that is not
part of $\lambda$.
With this, they also introduced an arithmetic function denoted by $\sigma\mathrm{mex}(n)$:
\begin{equation}
	\sigma\mathrm{mex}(n) :=
	\sum_{\lambda \in \mathcal{P}(n)}
	\mathrm{mex}(\lambda),
\end{equation}
where $\mathcal{P}(n)$ denotes the set of all partitions of $n$.
For example, the values for the minimal excludant for each partition of $n=5$ are: $\mathrm{mex}(5)=1$; $\mathrm{mex}(4+1)=2$; $\mathrm{mex}(3+2)=1$; $\mathrm{mex}(3+1+1)=2$; $\mathrm{mex}(2+2+1)=3$; $\mathrm{mex}(2+2+1)=3$; $\mathrm{mex}(1+1+1+1+1)=2$ with
$\sigma\mathrm{mex}(5) = 14$. If $p(n)$ denotes the number of partitions of $n$, they \cite{AN2019} also established the following identity:
\begin{equation}
	\sigma\mathrm{mex}(n) = p(n) +
	2\sum_{k=1}^{\infty} p(n - k^2).
\end{equation}

Recently, Baruah et al. \cite{BDR2023} explored the concept of minimal excludant functions and subsequently introduced the functions,
\begin{equation}
	\sigma_o\mathrm{mex}(n) :=
	\sum_{\lambda \in \mathcal{P}(n)}
	\mathrm{mex}_o(\lambda), \qquad
	\sigma_e\mathrm{mex}(n) :=
	\sum_{\lambda \in \mathcal{P}(n)}
	\mathrm{mex}_e(\lambda),
	\label{eq:BDR}
\end{equation}
where $\mathrm{mex}_o(\lambda)$
(resp.\ $\mathrm{mex}_e(\lambda)$) equals
$\mathrm{mex}(\lambda)$ if $\mathrm{mex}(\lambda)$ is
odd (resp.\ even) and equals $0$ otherwise.
Further, Barman and Singh \cite{BS2024} studied the arithmetic
properties and obtained asymptotic formulae for
$\sigma_o\mathrm{mex}(n)$ and
$\sigma_e\mathrm{mex}(n)$, proving lacunarity modulo
arbitrary powers of $2$ and established
\begin{equation}
	\sigma_o\mathrm{mex}(n) \sim
	\sigma_e\mathrm{mex}(n) \sim
	\frac{1}{8\sqrt[4]{6n^3}}
	\exp\!\left(\pi\sqrt{\frac{2n}{3}}\right)
	\quad \text{as } n \to \infty.
\end{equation}

The extension of the minimal excludant to overpartitions has been approached from more than one direction. Aricheta and Donato \cite{Aricheta2024} defined $\overline{\mathrm{mex}}(\pi)$ using only the non-overlined parts of an overpartition $\pi$, and established the bivariate generating function
\begin{equation}
	M(z,q):=\sum_{n=0}^{\infty}\sum_{m=1}^{\infty}
	p^{\overline{\mathrm{mex}}}(m,n)\,z^mq^n
	=\frac{(-q;q)_\infty}{(q;q)_\infty}
	\sum_{m=1}^{\infty} z^mq^{\binom m2}(1-q^m),
	\label{eq:aricheta}
\end{equation}
which, upon setting $z=1$ in $\partial M/\partial z$, recovers $\sigma\overline{\mathrm{mex}}(n)=D_3(n)$, the number of partitions of $n$ into distinct parts of three colors. More recently, Dhar, Mukhopadhyay, and Sarma \cite{Dhar2025} introduced four further overpartition mex variants and related them to $q$-series of Ramanujan: $\mathrm{omex}(\pi)$, the smallest positive integer missing from $\pi$ regardless of overline status; $\mathrm{omoex}(\pi)$, the smallest missing odd integer, searched among odd integers alone; and their overlined-part analogues $\widetilde{\mathrm{omex}}(\pi)$ and $\widetilde{\mathrm{omoex}}(\pi)$, defined under the ordering $1<\bar1<2<\bar2<\cdots$.

	Additionally, there are extensive works that delves deeper in to the areas of mex and mex related functions, we refer the readers to \cite{AN2020,BM2020,Barman2021,CR2021,HSS2022,HSY2022,KLW2021,KLW2023,KBEM2022,Ray2023,YangZhou2023}.

We now transition to the study of the overpartition analogue of $\sigma_o\mathrm{mex}(n)$. The minimal excludant we use, $\mathrm{mex}(\lambda)$, depends only on which values appear as parts of $\lambda$, regardless of whether they are overlined, precisely Dhar, Mukhopadhyay and Sarma's $\mathrm{omex}(\pi)$ \cite[Definition 1.1]{Dhar2025}. What distinguishes the present paper is the arithmetic function built from it: following the Baruah,Das and Ray convention \eqref{eq:BDR} rather than an odd-restricted search, we retain $\mathrm{mex}(\lambda)$ only when it happens to be odd, rather than restricting the search itself to odd integers as $\mathrm{omoex}(\pi)$ does. So the summatory function below is genuinely different from any of the four studied in \cite{Dhar2025}. We establish the following definition:

\begin{Definition}
	For $n \geq 0$, we define
	\begin{equation}
	\overline{\sigma_o\mathrm{mex}}(n):=
		\sum_{\lambda \in \overline{\mathcal{P}}(n)}
		\mathrm{mex}_o(\lambda),
	\end{equation}
	where $\overline{\mathcal{P}}(n)$ denotes the set of
	all overpartitions of $n$, and
	$\mathrm{mex}_o(\lambda)$ equals
	$\mathrm{mex}(\lambda)$ if $\mathrm{mex}(\lambda)$
	is odd, and equals $0$ otherwise.
\end{Definition}

It is straight forward to verify that the minimal excludant of an overpartition
depends only on which values appear as parts, regardless
of whether they are overlined. For instance, we tabulate below the values of $\overline{\sigma_o\mathrm{mex}}(n)$ for $0\leq n \leq 7$.
	\begin{table}[h]
	\centering
	\setcellgapes{4pt} 
	\makegapedcells
	\begin{tabular}{|c|c|c|c|c|c|c|c|c|}
		\hline
	$n$ & $0$ & $1$ & $2$ & $3$ & $4$ & $5$ & $6$ & $7$  \\
			\hline
	 $\overline{\sigma_o\mathrm{mex}}(n)$ & $1$ & $0$ & $2$ & $14$ & $16$ & $30$ & $34$ & $74$	\\
	 \hline
	\end{tabular}
\end{table}\\

Since, the function $\mathrm{mex}(\lambda)$ depends only on
which values appear as parts, regardless of
overlines, we have
\begin{equation}
\overline{\sigma_o\mathrm{mex}}(n) = \sum_{\substack{k \geq 1 \\
			k \text{ odd}}} k \cdot
	\#\bigl\{\lambda \in \overline{\mathcal{P}}(n)
	: \mathrm{mex}(\lambda) = k\bigr\}.
	\label{eq:omex_split}
\end{equation}
Here, we sum only over odd $k$ because
$\mathrm{mex}_o(\lambda) = 0$ whenever
$\mathrm{mex}(\lambda)$ is even, so even
values of $k$ contribute nothing to the sum.
Also, for an overpartition $\lambda$ has
$\mathrm{mex}(\lambda) = k$ if and only if
each of $1, 2, \ldots, k-1$ appears in
$\lambda$ and $k$ does not appear.
For a given value $m$, we note that, for $m$ absent and $m$ free the generating functions are $1$ and $\dfrac{1+q^m}{1-q^m}$, respectively, while if $m$ is forced to appear the generating function is $\dfrac{2q^m}{1-q^m}$.

The mechanism behind the generating function of Theorem \ref{thm:gf} below is similar to Aricheta and Donato's construction of $M(z,q)$ in \eqref{eq:aricheta}: both decompose the overpartition by which small values are forced to appear, missing, or free to occur. The two constructions differ in only one aspect -- whether a forced value must appear specifically as a non-overlined part, as in \cite{Aricheta2024}, or may appear in either color, as here, and this single distinction is what distinguishes the two theories arithmetically. In \cite{Aricheta2024}, a value $k$ forced to appear as a non-overlined part contributes $(1+q^k)\cdot\dfrac{q^k}{1-q^k}$ to $M(z,q)$, whereas in our setup a value $k$ forced merely to appear -- overlined or not -- contributes $\dfrac{2q^k}{1-q^k}$. Summing the forced contributions for $k=1,\ldots,2j$ yields the same triangular exponent $q^{\binom{2j+1}{2}}=q^{j(2j+1)}$ in both theories, but with coefficient $1$ in \cite{Aricheta2024} and $4^j$ here. This factor of $4^j$, appearing in Theorem \ref{thm:gf} below, is precisely why $\sigma\overline{\mathrm{mex}}(n)=D_3(n)$ exactly in \cite{Aricheta2024}, while $\overline{\sigma_o\mathrm{mex}}(n)$ instead satisfies only congruences modulo powers of $2$ (Corollary \ref{thm:mod4equiv}, Corollary \ref{thm:parity}, Theorem \ref{thm:main}, Theorem \ref{thm:selfconv}) rather than a closed combinatorial identity.

\indent The overarching goal in this paper is to focus on arithmetic properties of $\overline{\sigma_o\mathrm{mex}}(n)$ by relying on these congruences. In particular, Theorem \ref{thm:gf} provides the exact generating function for $\overline{\sigma_o\mathrm{mex}}(n)$ for integers $n\geq 0$.
	\begin{theorem}
		\label{thm:gf}
		For all integers $n \geq 0$, we have
		\begin{equation}
			\sum_{n=0}^{\infty}\overline{\sigma_o\mathrm{mex}}(n)\, q^n
			= \frac{1}{(q;q)_{\infty}}
			\sum_{j=0}^{\infty} (2j+1) \cdot 4^j \cdot
			q^{j(2j+1)}
			(1-q^{2j+1})(-q^{2j+2};q)_{\infty}.
		\end{equation}
	\end{theorem}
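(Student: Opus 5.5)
We prove the identity by computing, for each odd $k=2j+1$, the generating function for overpartitions with $\mathrm{mex}(\lambda)=k$, and then summing with weight $k$ as in \eqref{eq:omex_split}. By the characterization stated before the theorem, $\mathrm{mex}(\lambda)=2j+1$ exactly when each of $1,\dots,2j$ appears (overlined or not), the value $2j+1$ does not appear, and every value $m\ge 2j+2$ is unrestricted. Since an overpartition is determined independently by the multiset of copies of each value, together with a choice of whether the first copy is overlined, the generating function factors over values $m$. We use the three local factors already recorded in the paper: $\frac{2q^m}{1-q^m}$ for a value forced to appear, $1$ for an absent value, and $\frac{1+q^m}{1-q^m}$ for a free value.

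\textbf{Step 1 (forced values).} The product over $m=1,\dots,2j$ is
\[
\prod_{m=1}^{2j}\frac{2q^m}{1-q^m}=\frac{2^{2j}\,q^{j(2j+1)}}{(q;q)_{2j}}=\frac{4^j q^{j(2j+1)}}{(q;q)_{2j}}.
\]
This uses $1+2+\cdots+2j=j(2j+1)$.

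\textbf{Step 2 (free values).} The value $2j+1$ contributes $1$. The values $m\ge 2j+2$ contribute
\[
\prod_{m\ge 2j+2}\frac{1+q^m}{1-q^m}=\frac{(-q^{2j+2};q)_\infty}{(q^{2j+2};q)_\infty}.
\]

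\textbf{Step 3 (normalization).} We pull out $1/(q;q)_\infty$ by writing
\[
\frac{1}{(q;q)_{2j}\,(q^{2j+2};q)_\infty}=\frac{1-q^{2j+1}}{(q;q)_\infty}.
\]
The number of overpartitions of $n$ with mex $2j+1$ therefore has generating function
\[
\frac{4^j q^{j(2j+1)}(1-q^{2j+1})(-q^{2j+2};q)_\infty}{(q;q)_\infty}.
\]

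\textbf{Step 4 (summation).} Multiplying by $2j+1$ and summing over $j\ge0$ gives the theorem, via \eqref{eq:omex_split}. The case $j=0$ gives the overpartitions with no part equal to $1$, and the empty overpartition of $0$ is correctly counted with mex $1$. The sum over $j$ is formally convergent because the $j$-th term is $O(q^{j(2j+1)})$.

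The only step needing care is the justification of the local factor $\frac{2q^m}{1-q^m}$. If value $m$ occurs $r\ge1$ times, there are exactly two overpartition configurations, according to whether the first copy is overlined, so the factor is $\sum_{r\ge1}2q^{mr}$. Similarly the free factor is $1+\sum_{r\ge1}2q^{mr}=\frac{1+q^m}{1-q^m}$. Once these are checked, the rest is routine bookkeeping with $q$-Pochhammer symbols. As a sanity check, we would compare the first few coefficients with the table, for example $\overline{\sigma_o\mathrm{mex}}(2)=2$.
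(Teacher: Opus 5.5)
Your proposal is correct and follows the paper's proof essentially step for step. Both arguments factor the generating function over values $m$, using the forced factor $\frac{2q^m}{1-q^m}$ for $m\le 2j$, the factor $1$ for the absent value $2j+1$, and the free factor $\frac{1+q^m}{1-q^m}$ for $m\ge 2j+2$; both then rewrite $\frac{1}{(q;q)_{2j}(q^{2j+2};q)_\infty}=\frac{1-q^{2j+1}}{(q;q)_\infty}$ and sum with weight $2j+1$. Your explicit derivation of the local factors and your remarks on $j=0$ and formal convergence are small justifications that the paper leaves implicit.
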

	\noindent As a consequence of Theorem \ref{thm:gf}, we deduce arithmetic congruences for $\overline{\sigma_o\mathrm{mex}}(n)$ modulo 2 and 4.

	\begin{corollary}
		\label{thm:mod4equiv}
		For all $n \geq 0$, we have
		\begin{equation}
		\overline{\sigma_o\mathrm{mex}}(n) \equiv \overline{p}_{\geq 2}(n) \pmod{4},
		\end{equation}
		where $\overline{p}_{\geq 2}(n)$ denotes the number
		of overpartitions of $n$ with all parts at least $2$.
	\end{corollary}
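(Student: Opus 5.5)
Reducing the generating function of Theorem \ref{thm:gf} modulo $4$ will do it. For $j\ge 1$ the coefficient $(2j+1)\cdot 4^j$ is divisible by $4$. Every other factor in the $j$-th summand, namely
\[
\frac{1}{(q;q)_\infty},\qquad q^{j(2j+1)}(1-q^{2j+1}),\qquad (-q^{2j+2};q)_\infty,
\]
is a power series with integer coefficients. Hence every term with $j\ge1$ vanishes coefficientwise modulo $4$, and only the $j=0$ term survives:
\[
\sum_{n\ge0}\overline{\sigma_o\mathrm{mex}}(n)q^n \equiv \frac{(1-q)(-q^{2};q)_\infty}{(q;q)_\infty} \pmod 4 .
\]
Coefficientwise reduction is legitimate here because each summand starts at $q^{j(2j+1)}$, so for a fixed $n$ only finitely many $j$ contribute to the coefficient of $q^n$.

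Next we identify this surviving term. Cancelling the factor $1-q$ against $(q;q)_\infty$ gives
\[
\frac{(1-q)(-q^2;q)_\infty}{(q;q)_\infty}=\frac{(-q^2;q)_\infty}{(q^2;q)_\infty}=\prod_{m\ge2}\frac{1+q^m}{1-q^m}.
\]
As noted in the introduction, $\frac{1+q^m}{1-q^m}$ is the generating function for the occurrences of the value $m$ in an overpartition when $m$ is free, overlined or not. The product over $m\ge2$ therefore generates exactly $\overline{p}_{\ge2}(n)$. Comparing coefficients of $q^n$ gives the congruence $\overline{\sigma_o\mathrm{mex}}(n)\equiv\overline{p}_{\ge2}(n)\pmod 4$ for all $n\ge0$.

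There is no real obstacle here. The only points needing care are the integrality of every factor, which justifies reducing termwise, and the finiteness remark above. As a sanity check, the same computation can be seen combinatorially. Overpartitions with $\mathrm{mex}=1$ are exactly those with no part equal to $1$, contributing $1\cdot\overline{p}_{\ge2}(n)$. For odd $k\ge3$ the values $1,\dots,k-1$ are each forced, which gives $2^{k-1}\equiv0\pmod4$ choices of overlines for their first occurrences.
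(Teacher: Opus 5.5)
Your proof is correct and follows the paper's route: Lemma~\ref{lem:mod4} discards every $j\ge1$ term of Theorem~\ref{thm:gf} because $4^j\equiv0\pmod 4$, and the corollary then identifies the surviving term $G(q)=(-q^2;q)_\infty/(q^2;q)_\infty=\prod_{m\ge2}\frac{1+q^m}{1-q^m}$ as the generating function of $\overline{p}_{\geq 2}(n)$. Your added justifications (integrality of each factor, only finitely many $j$ contributing to each coefficient) and the combinatorial check via the $2^{k-1}$ overline choices are sound refinements of the same argument.
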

	\begin{corollary}
		\label{thm:parity}
		For all $n \geq 1$, we have
		\begin{equation}
			\overline{\sigma_o\mathrm{mex}}(n) \equiv 0 \pmod{2}.
		\end{equation}
	\end{corollary}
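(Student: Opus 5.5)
The plan is to derive the claim directly from Corollary \ref{thm:mod4equiv} by showing that $\overline{p}_{\geq 2}(n)$ is even for every $n\ge 1$. Reducing that congruence modulo $2$ gives $\overline{\sigma_o\mathrm{mex}}(n)\equiv \overline{p}_{\geq 2}(n)\pmod 2$. So it suffices to prove $\overline{p}_{\geq 2}(n)\equiv 0 \pmod 2$ for $n\ge1$.

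For the parity of $\overline{p}_{\geq 2}(n)$, I will use its generating function
\[
\sum_{n\ge0}\overline{p}_{\geq 2}(n)q^n=\prod_{m\ge2}\frac{1+q^m}{1-q^m}.
\]
Modulo $2$ we have $1+q^m\equiv 1-q^m$, so each factor is congruent to $1$, and the whole product is $\equiv 1 \pmod 2$. Hence every coefficient with $n\ge1$ is even.

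There is also a combinatorial version of the same fact. Toggling the overline on the largest part is a fixed-point-free involution on the set of nonempty overpartitions of $n$. This works because the first occurrence of the largest value can always be overlined or not, so toggling never leaves the set. Nonempty overpartitions therefore come in pairs, and the count is even.

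The step that needs care is justifying the formal congruence of power series. Writing $\frac{1+q^m}{1-q^m}=1+\frac{2q^m}{1-q^m}$ shows that each factor is $1$ plus a series with even integer coefficients. Infinite products of such factors converge $q$-adically and stay $\equiv 1\pmod 2$. As a check, the result agrees with the tabulated values: $2,14,16,30,34,74$ are all even, while $n=0$ gives $1$.
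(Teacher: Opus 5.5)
Your proposal is correct and follows the paper's proof: you reduce Corollary~\ref{thm:mod4equiv} modulo $2$ and use $1+q^m\equiv 1-q^m \pmod 2$ for $m\ge 2$, so that $\prod_{m\ge2}\frac{1+q^m}{1-q^m}\equiv 1\pmod 2$. Your overline-toggling involution on the largest part is also valid, and since $\mathrm{mex}$ ignores overlines, the same involution would give the parity of $\overline{\sigma_o\mathrm{mex}}(n)$ directly, without going through the generating function.
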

	\begin{theorem}
		\label{thm:main}
		We have $\overline{\sigma_o\mathrm{mex}}(0)=1$. Moreover, for all $n \geq 1$, we have
		\begin{equation}
			\overline{\sigma_o\mathrm{mex}}(n) \equiv 0 \pmod{4}
			\text{ if and only if }
			n \text{ is a perfect square.}
		\end{equation}
	\end{theorem}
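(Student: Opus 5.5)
The plan is to reduce everything to Corollary \ref{thm:mod4equiv}, which gives $\overline{\sigma_o\mathrm{mex}}(n)\equiv \overline{p}_{\geq 2}(n)\pmod 4$. It then suffices to compute the generating function of $\overline{p}_{\geq 2}(n)$ modulo $4$ and read off its coefficients.

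The value $\overline{\sigma_o\mathrm{mex}}(0)=1$ comes directly from Theorem \ref{thm:gf}. The only term with a nonzero constant coefficient is $j=0$, which contributes $1\cdot(1-q)(-q^2;q)_\infty/(q;q)_\infty=1+O(q)$. Combinatorially, the empty overpartition has $\mathrm{mex}=1$, which is odd.

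Next, removing the factor for the value $1$ from the overpartition generating function gives
\[
\sum_{n\ge0}\overline{p}_{\geq 2}(n)q^n=\frac{1-q}{1+q}\cdot\frac{f_2}{f_1^2}=\frac{1-q}{1+q}\cdot\frac{1}{\varphi(-q)}.
\]
Here I use the classical product $\varphi(-q)=f_1^2/f_2$. I write $\varphi(-q)=1+2T$ with $T=\sum_{k\ge1}(-1)^kq^{k^2}$. As formal power series, $1/(1+2T)=\sum_{i\ge0}(-2T)^i\equiv 1-2T\pmod 4$, because every term with $i\ge2$ is divisible by $4$. Similarly,
\[
\frac{1-q}{1+q}=1+2A,\qquad A=\sum_{m\ge1}(-1)^mq^m .
\]
Multiplying and discarding the multiple of $4$ coming from $4AT$, I get
\[
\sum_{n\ge0}\overline{p}_{\geq 2}(n)q^n\equiv 1+2A-2T\pmod 4 .
\]

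For $n\ge1$ the coefficient of $q^n$ is therefore $2(-1)^n-2(-1)^k\,[n=k^2]$. Since $2(\pm1)\equiv2\pmod 4$, this is $\equiv 2$ when $n$ is not a perfect square. When $n=k^2$ it is $2(-1)^{k^2}-2(-1)^k=0$, since $k^2\equiv k\pmod 2$. Combined with Corollary \ref{thm:mod4equiv}, this gives $\overline{\sigma_o\mathrm{mex}}(n)\equiv0\pmod4$ exactly when $n$ is a square, and $\equiv2\pmod4$ otherwise. This is consistent with Corollary \ref{thm:parity} and with the table: $\overline{\sigma_o\mathrm{mex}}(4)=16$ and $\overline{\sigma_o\mathrm{mex}}(1)=0$ are $\equiv 0$, while $2$, $14$, $30$, $34$, $74$ are all $\equiv2\pmod 4$.

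The only point needing care is the modular arithmetic with formal power series. This means justifying $(1+2T)^{-1}\equiv 1-2T\pmod 4$ coefficientwise, and checking that the truncation error $4AT$ really has all coefficients divisible by $4$. Both follow because all the series involved have integer coefficients, so there is no genuine obstacle once Corollary \ref{thm:mod4equiv} is available.
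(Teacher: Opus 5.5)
Your proof is correct, but it reaches the key congruence by a different route than the paper.

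\textbf{Common starting point.} Both arguments begin with the reduction $\overline{\sigma_o\mathrm{mex}}(n)\equiv\overline{p}_{\geq 2}(n)\pmod 4$ (Lemma~\ref{lem:mod4}, equivalently Corollary~\ref{thm:mod4equiv}).

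\textbf{The paper's route.} It evaluates $G(q)=\prod_{n\ge2}\frac{1+q^n}{1-q^n}$ modulo $4$ one factor at a time. Each factor is written as $1+2\sum_{k\ge1}q^{nk}$, and cross terms are discarded. This gives $G(q)\equiv 1+2\sum_{N\ge1}(d(N)-1)q^N$. Squares are then detected through the parity of the divisor function, using the pairing $n\leftrightarrow N/n$. The outcome is Lemma~\ref{lem:keyid}, $G(q)\equiv\frac{2}{1-q}-\varphi(q)\pmod 4$.

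\textbf{Your route.} You use the product formula $\varphi(-q)=f_1^2/f_2$ from \eqref{eq:phiminus} to write $G(q)=\frac{1-q}{1+q}\cdot\frac{1}{\varphi(-q)}$. The squares then appear directly as the exponents of the theta series, and inverting $1+2T$ modulo $4$ is immediate. Your expression $1+2A-2T$ agrees with $\frac{2}{1-q}-\varphi(q)$ modulo $4$ because $2(\pm1)\equiv 2\pmod 4$. So you have in effect reproved Lemma~\ref{lem:keyid}.

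\textbf{What each buys.} The paper's argument is fully elementary and needs no theta-product identity. Yours is shorter but relies on Gauss's identity, a consequence of the Jacobi triple product.

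Your method has a side benefit: since $(1+2T)^2=1+4T+4T^2$, it shows $\varphi(\pm q)^2\equiv 1\pmod 4$ with no further input. Lemma~\ref{lem:jacobi} therefore does not need the modular-forms verification of Corollary~\ref{cor:phisq}; that verification is only needed modulo $8$ in Theorem~\ref{thm:selfconv}. You also obtain $\overline{\sigma_o\mathrm{mex}}(0)=1$ exactly from Theorem~\ref{thm:gf}. The paper's congruence alone fixes this value only modulo $4$.
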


	\begin{theorem}
		\label{thm:conv}
		Let $\varphi(q) = \displaystyle{\sum_{n=-\infty}^{\infty} q^{n^2}}$. For all $n \geq 1$, we have
		\begin{align}
			\sum_{k=0}^{n} \overline{\sigma_o\mathrm{mex}}(k) \cdot
			\overline{p}_{\geq 2}(n-k)
			&\equiv 0 \pmod{4},
			\label{eq:conv1} \\
			\sum_{k=0}^{n} \overline{\sigma_o\mathrm{mex}}(k) \cdot
			[q^{n-k}]\varphi(q)
			&\equiv 2 \pmod{4}.
			\label{eq:conv2}
		\end{align}
	\end{theorem}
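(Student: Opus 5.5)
We work throughout with power series modulo $4$ and begin from Corollary \ref{thm:mod4equiv}. That corollary gives
\[
\sum_{n\ge0}\overline{\sigma_o\mathrm{mex}}(n)q^n \equiv \sum_{n\ge0}\overline{p}_{\geq 2}(n)q^n \pmod 4 .
\]
The overpartitions with all parts at least $2$ have generating function
\[
P_{\ge2}(q):=\frac{(-q^2;q)_\infty}{(q^2;q)_\infty}=\frac{1-q}{1+q}\cdot\frac{(-q;q)_\infty}{(q;q)_\infty}.
\]
We will use the classical identity $\frac{(q;q)_\infty}{(-q;q)_\infty}=\varphi(-q)$, that is, $f_1^2/f_2=\varphi(-q)$. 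With it,
\[
\frac{1}{P_{\ge2}(q)}=\frac{1+q}{1-q}\,\varphi(-q).
\]

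For \eqref{eq:conv1}, write $S(q)=\sum_n\overline{\sigma_o\mathrm{mex}}(n)q^n$. The left side of \eqref{eq:conv1} is the $n$-th coefficient of $S(q)P_{\ge2}(q)$, so we need $S P_{\ge2}\equiv 1\pmod 4$ for $n\ge1$. By the congruence above, $S P_{\ge2}\equiv P_{\ge2}^2 \pmod 4$. We therefore need $P_{\ge2}^2\equiv 1\pmod 4$, equivalently $P_{\ge2}^{-2}\equiv1\pmod 4$. Squaring the displayed reciprocal gives
\[
P_{\ge2}^{-2}=\frac{(1+q)^2}{(1-q)^2}\,\varphi(-q)^2 .
\]
Since $\varphi(-q)=1+2\sum_{k\ge1}(-1)^kq^{k^2}$, we have $\varphi(-q)^2\equiv 1+4(\cdots)\equiv 1\pmod 4$. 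Next, $(1+q)^2/(1-q)^2=(1+2q/(1-q))^2=1+4q/(1-q)+4q^2/(1-q)^2\equiv1\pmod 4$. Hence $P_{\ge2}^{2}\equiv 1\pmod 4$, and \eqref{eq:conv1} follows.

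For \eqref{eq:conv2}, the left side is the $n$-th coefficient of $S(q)\varphi(q)\equiv P_{\ge2}(q)\varphi(q)\pmod 4$. Because $P_{\ge2}^2\equiv1$, we may write $P_{\ge2}\equiv P_{\ge2}^{-1}\pmod 4$. This gives
\[
P_{\ge2}\varphi(q)\equiv \frac{1+q}{1-q}\,\varphi(-q)\varphi(q)\pmod 4 .
\]
Now $\varphi(q)\varphi(-q)=\varphi(-q^2)^2\equiv 1\pmod 4$ by the same squaring argument; alternatively, $\varphi(q)\equiv\varphi(-q)\pmod 4$ already gives $\varphi(q)\varphi(-q)\equiv\varphi(-q)^2\equiv1$. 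Therefore $P_{\ge2}\varphi\equiv \frac{1+q}{1-q}=1+2\sum_{n\ge1}q^n\pmod 4$. Every coefficient with $n\ge1$ is $2$, which is \eqref{eq:conv2}.

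The only point needing care is the legitimacy of the congruence manipulations. Congruences modulo $4$ between integral power series with constant term $1$ are preserved under products and inverses. So replacing $S$ by $P_{\ge2}$, and later $P_{\ge2}$ by its inverse, is valid once we have $P_{\ge2}^2\equiv1$. I expect no real obstacle beyond recalling $f_1^2/f_2=\varphi(-q)$. Corollary \ref{thm:mod4equiv} is taken as given.
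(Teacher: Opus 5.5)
Your argument is correct; it differs from the paper's in how you obtain the two key relations $G^2\equiv1$ and $G\varphi(q)\equiv\frac{1+q}{1-q}\pmod 4$ (the relations \eqref{eq:klein}). The final coefficient extraction is the same.

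The paper gets these relations from two ingredients:
\begin{itemize}
\item Lemma~\ref{lem:keyid}, the residue formula $G\equiv\frac{2}{1-q}-\varphi(q)\pmod 4$. It is proved by expanding each factor as $1+2\sum_{k\ge1}q^{nk}$ and using the parity of the divisor function.
\item $\varphi(q)^2\equiv1\pmod 4$, which the paper takes from Jacobi's two-square theorem via the eta-quotient criterion (Corollary~\ref{cor:phisq}).
\end{itemize}

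You instead use the exact identity $G(q)\varphi(-q)=\frac{1-q}{1+q}$, which is just \eqref{eq:phiminus} rewritten. You combine it with the trivial fact $(1+2X)^2\equiv1\pmod 4$ for any integral series $X$.

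Your route is shorter and entirely elementary. It also shows that no modular-forms input is needed for this theorem, since $\varphi(q)^2\equiv1\pmod 4$ is immediate from $\varphi(q)=1+2\sum_{n\ge1}q^{n^2}$.

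It loses nothing relative to the paper's lemma. Reduce your identity modulo $4$ using $\varphi(-q)^{-1}\equiv\varphi(-q)\equiv\varphi(q)$ and $\frac{1-q}{1+q}\equiv\frac{1+q}{1-q}=\frac{2}{1-q}-1$. This gives $G\equiv\frac{2}{1-q}-\varphi(q)\pmod 4$ in one line. So it could replace the divisor-counting proof of Lemma~\ref{lem:keyid} in the later sections too. Theorem~\ref{thm:selfconv}, being a mod-$8$ statement, is where the paper actually uses the exact two-square formula.

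A small streamlining: for \eqref{eq:conv2} you do not need $G\equiv G^{-1}$ at all. Directly, $G\varphi(q)\equiv G\varphi(-q)=\frac{1-q}{1+q}\equiv\frac{1+q}{1-q}\pmod 4$.
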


	\begin{theorem}
		\label{thm:partial}
		For all $n \geq 0$, we have
		\begin{equation}
			\sum_{k=0}^{n} \overline{\sigma_o\mathrm{mex}}(k)
			\equiv 2\bigl(n - \lfloor\sqrt{n}\rfloor\bigr) + 1
			\pmod{4}.
		\end{equation}
	\end{theorem}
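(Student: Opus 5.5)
The plan is to reduce everything to the behaviour of $\overline{\sigma_o\mathrm{mex}}(k)$ modulo $4$, using Theorem \ref{thm:main} and the case $k=0$. By Theorem \ref{thm:main} we have $\overline{\sigma_o\mathrm{mex}}(0)=1$, and for $k\ge 1$ we have $\overline{\sigma_o\mathrm{mex}}(k)\equiv 0 \pmod 4$ when $k$ is a perfect square. When $k\ge1$ is not a square, Corollary \ref{thm:parity} gives $\overline{\sigma_o\mathrm{mex}}(k)\equiv 0\pmod 2$, and Theorem \ref{thm:main} says it is not $\equiv 0 \pmod 4$. Hence $\overline{\sigma_o\mathrm{mex}}(k)\equiv 2\pmod 4$ for every non-square $k\ge 1$.

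With this classification the partial sum is a count. Modulo $4$,
\[
\sum_{k=0}^{n}\overline{\sigma_o\mathrm{mex}}(k)\equiv 1+2\cdot\#\{1\le k\le n:\ k \text{ not a perfect square}\}.
\]
The squares in $[1,n]$ are $1^2,\dots,\lfloor\sqrt n\rfloor^2$, so there are exactly $\lfloor\sqrt n\rfloor$ of them, and the number of non-squares is $n-\lfloor\sqrt n\rfloor$. This gives the stated congruence $2(n-\lfloor\sqrt n\rfloor)+1 \pmod 4$. The case $n=0$ is covered trivially, since both sides equal $1$.

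I do not expect a real obstacle here, as the statement is a direct consequence of Theorem \ref{thm:main}. The only point to take care over is that Theorem \ref{thm:main} alone says non-squares are nonzero mod $4$. Combining it with the parity result Corollary \ref{thm:parity} pins the residue down to exactly $2$, and this is what turns the sum into $2$ times a count.

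As an alternative check, one could work at the generating-function level. The argument behind Theorem \ref{thm:main} presumably gives
\[
\sum_{n\ge0}\overline{\sigma_o\mathrm{mex}}(n)q^n\equiv 1+\frac{2q}{1-q}-2\sum_{m\ge1}q^{m^2}\pmod 4.
\]
Multiplying by $1/(1-q)$ and reading off coefficients reproduces the same formula.
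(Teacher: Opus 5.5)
Your proof is correct and is essentially the paper's argument. The paper multiplies the mod-$4$ congruence $\sum_{n\ge0}\overline{\sigma_o\mathrm{mex}}(n)q^n\equiv \frac{2}{1-q}-\varphi(q)$ (Lemma~\ref{lem:keyid}) by $\frac{1}{1-q}$ and reads off coefficients, which is exactly your ``alternative check'' and is the generating-function form of your coefficient count. Your use of Corollary~\ref{thm:parity} to fix the non-square residue at $2$ is valid, although the proof of Theorem~\ref{thm:main} already gives the exact residues $0$ and $2$ directly.
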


	\begin{theorem}
		\label{thm:selfconv}
		Let $d_1(n)$ (resp.\ $d_3(n)$) denote the number of
		divisors of $n$ congruent to $1$ (resp.\ $3$) modulo
		$4$. For all $n \geq 1$, we have
		\begin{equation}
			\sum_{k=0}^{n}\overline{\sigma_o\mathrm{mex}}(k) \cdot \overline{\sigma_o\mathrm{mex}}(n-k)
			\equiv 4\bigl(n + d_1(n) - d_3(n)\bigr) \pmod{8}.
			\label{eq:selfconv}
		\end{equation}
		In particular, the self-convolution sum is congruent
		to $0$ modulo $8$ if and only if $n$ is an odd perfect
		square, or $n = 2^a m$ where $a \geq 1$ and $m$ is an
		odd non-square integer.
	\end{theorem}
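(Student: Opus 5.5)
The plan is to reduce the self-convolution modulo $8$ to the information modulo $4$ that is already available from Theorem \ref{thm:main} and Corollary \ref{thm:parity}, and then compare the result with the parity of $d_1(n)-d_3(n)$. Write $F(q)=\sum_{n\ge0}\overline{\sigma_o\mathrm{mex}}(n)q^n$. By Theorem \ref{thm:main} and Corollary \ref{thm:parity}, $\overline{\sigma_o\mathrm{mex}}(0)=1$ and, for $n\ge1$, $\overline{\sigma_o\mathrm{mex}}(n)\equiv 2$ or $0 \pmod 4$ according as $n$ is a non-square or a square. Hence we can write
\[
F(q)=A(q)+4H(q),\qquad A(q)=1+2B(q),\qquad B(q)=\sum_{\substack{n\ge1\\ n\ \text{non-square}}}q^n ,
\]
with $H\in\mathbb{Z}[[q]]$.

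\textbf{Reduction modulo $8$.} Squaring gives $F^2=A^2+8AH+16H^2\equiv A^2 \pmod 8$. Expanding, $A^2=1+4B+4B^2$. Since $B^2\equiv B(q^2)\pmod 2$ (Frobenius over $\mathbb{F}_2$), we get $4B^2\equiv 4B(q^2)\pmod 8$. Therefore, for $n\ge1$,
\[
\sum_{k=0}^{n}\overline{\sigma_o\mathrm{mex}}(k)\,\overline{\sigma_o\mathrm{mex}}(n-k)\equiv 4\bigl(\chi(n)+\chi(n/2)\bigr)\pmod 8,
\]
where $\chi(m)=1$ if $m$ is a positive integer that is not a perfect square, and $\chi(m)=0$ otherwise (in particular $\chi(n/2)=0$ for odd $n$).

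\textbf{Parity of $d_1(n)-d_3(n)$.} The function $d_1-d_3$ is multiplicative. Its value at $p^e$ is $1$ for $p=2$, $e+1$ for $p\equiv1\pmod4$, and $1$ or $0$ for $p\equiv3\pmod4$ according as $e$ is even or odd. So $d_1(n)-d_3(n)$ is odd exactly when every odd prime appears to an even exponent, i.e. when $n=2^a m^2$. Equivalently, for $n\ge1$ it is odd iff $n$ is a square or twice a square.

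\textbf{Case check.} It remains to verify $\chi(n)+\chi(n/2)\equiv n+d_1(n)-d_3(n)\pmod 2$.
\begin{itemize}
\item For $n$ odd, the right side is $1+[n\text{ square}]\equiv\chi(n)$, while $\chi(n/2)=0$, so the two sides agree.
\item For $n$ even, the right side is $[n\text{ square or twice a square}]$. The left side is $\chi(n)+\chi(n/2)$, which is $0$ if both $n$ and $n/2$ are non-squares, and $1$ if exactly one of $n$, $n/2$ is a square. They cannot both be squares, since $\sqrt2$ is irrational.
\end{itemize}
This proves \eqref{eq:selfconv}.

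The ``in particular'' statement follows by reading off when $\chi(n)+\chi(n/2)$ is even. For odd $n$ this happens iff $n$ is a square. For even $n=2^am$ with $m$ odd, it happens iff neither $n$ nor $n/2$ is a square, i.e. iff $m$ is a non-square: if $m$ is a square, one of $n$, $n/2$ is a square according to the parity of $a$.

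The only delicate step is the reduction $F^2\equiv A^2\pmod8$ together with $B^2\equiv B(q^2)\pmod2$, since everything hinges on $\overline{\sigma_o\mathrm{mex}}(n)$ being known exactly modulo $4$ and not just modulo $2$. Theorem \ref{thm:main} (combined with Corollary \ref{thm:parity}) supplies exactly this, so I expect no real obstacle beyond bookkeeping.
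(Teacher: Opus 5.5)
Your argument is correct, but it takes a different route from the paper.

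In its proof of this theorem, the paper does not use the coefficientwise mod-$4$ residues. It squares the closed form $F\equiv G\equiv \frac{2}{1-q}-\varphi(q)\pmod 4$ (Lemmas~\ref{lem:mod4} and~\ref{lem:keyid}) to get $F^2\equiv \frac{4}{(1-q)^2}-\frac{4\varphi(q)}{1-q}+\varphi(q)^2\pmod 8$. It then inserts Jacobi's two-square identity $\varphi(q)^2=1+4\sum_{n\ge1}(d_1(n)-d_3(n))q^n$ (Corollary~\ref{cor:phisq}) and reads off $4(n+1)-4(1+2\lfloor\sqrt n\rfloor)+4(d_1(n)-d_3(n))$. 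The characterization then uses $d_1(n)-d_3(n)\equiv d(m)\pmod 2$, where $m$ is the odd part of $n$.

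You instead write $F=1+2B+4H$ from Theorem~\ref{thm:main} and Corollary~\ref{thm:parity}. You use $B^2\equiv B(q^2)\pmod 2$ to obtain the explicit residue $4\bigl(\chi(n)+\chi(n/2)\bigr)\pmod 8$. Only then do you match it with $n+d_1(n)-d_3(n)$, through a parity computation and a short case check.

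What each route buys:
\begin{itemize}
\item Your route is fully elementary, with no two-square theorem and no eta-quotients. It shows that only the parity of $d_1(n)-d_3(n)$ matters. The same Frobenius step applied to $\varphi(q)=1+2\sum_{k\ge1}q^{k^2}$ already gives $\varphi(q)^2\equiv1+4\sum_{k\ge1}\bigl(q^{k^2}+q^{2k^2}\bigr)\pmod 8$. So the paper's modular-forms input is stronger than this statement requires.
\item Your route also gives the ``in particular'' clause almost for free, since your residue is stated directly in terms of whether $n$ and $n/2$ are squares.
\item The paper's route, in return, explains why $d_1-d_3$ is the natural quantity in the formula. It produces the displayed congruence in a single coefficient extraction, without case analysis.
\end{itemize}

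Two minor points. The parity of $d_1(n)-d_3(n)$ follows slightly faster from the paper's observation $d_1(n)-d_3(n)=d_1(m)-d_3(m)\equiv d(m)\pmod 2$ than from multiplicativity. Also, your symbol $\chi$ collides with the paper's Nebentypus character.
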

		We conclude this paper by establishing the following asymptotic formula for $\overline{\sigma_o\mathrm{mex}}(n)$.
	\begin{theorem}\label{thm:asymp}
As $n\to\infty$,
\[
\overline{\sigma_o\mathrm{mex}}(n)\ \sim\
\frac{\sqrt2}{16}\, n^{-3/4}\exp\bigl(\pi\sqrt n\bigr).
\]
\end{theorem}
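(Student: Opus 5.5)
The plan is to apply Wright's circle method to the generating function of Theorem~\ref{thm:gf}, after factoring out the overpartition generating function. Since $(-q^{2j+2};q)_\infty=(-q;q)_\infty/(-q;q)_{2j+1}$, Theorem~\ref{thm:gf} gives
\[
F(q):=\sum_{n\ge0}\overline{\sigma_o\mathrm{mex}}(n)q^n=\frac{f_2}{f_1^2}\,S(q),\qquad
S(q):=\sum_{j\ge0}\frac{(2j+1)\,4^j\,q^{j(2j+1)}(1-q^{2j+1})}{(-q;q)_{2j+1}}.
\]
The first factor is the overpartition generating function. Its modular transformation gives, for $q=e^{-z}$ with $z=t+iy$ and $|y|\le \Delta t$ (the major arc),
\[
\frac{f_2}{f_1^2}=\sqrt{\frac{z}{2\pi}}\;e^{\pi^2/(4z)}\bigl(1+O(e^{-c/|z|})\bigr).
\]

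The key analytic step is the asymptotic behaviour of $S(e^{-z})$ as $z\to0$ in the cone. I would put $u=2j+1$. Then $4^j=2^{u}/2$, $j(2j+1)=(u^2-u)/2$ and $1-q^{u}\approx uz$. I would also write $(-q;q)_u=2^u\prod_{k\le u}\frac{1+q^k}{2}$ and use $\log\frac{1+e^{-kz}}{2}=-\frac{kz}{2}+O(k^2|z|^2)$. Together these give $(-q;q)_u=2^u e^{-zu(u+1)/4}\bigl(1+O(u^3|z|^2)\bigr)$. Substituting, the powers of $2$ cancel and the $u$-linear terms in the exponent cancel too. The summand becomes
\[
\frac{u^2 z}{2}\,e^{-zu^2/4}\bigl(1+O(u|z|+u^3|z|^2)\bigr).
\]
The dominant range is $u\asymp |z|^{-1/2}$. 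Replacing the sum over odd $u$ by $\tfrac12\int_0^\infty$ (Euler--Maclaurin) and using $\int_0^\infty u^2e^{-zu^2/4}\,du=2\sqrt\pi\,z^{-3/2}$, I expect
\[
S(e^{-z})=\frac{\sqrt\pi}{2}\,z^{-1/2}\bigl(1+O(|z|^{1/2})\bigr).
\]
The range $u\gg |z|^{-1/2}$ is negligible because of the Gaussian factor. On this range I need a crude bound $|(-q;q)_u|^{-1}\le 2^{-u}e^{-c\,t u^2}$, which I would justify using $|y|\le\Delta t$. Hence on the major arc
\[
F(e^{-z})\sim\frac{1}{2\sqrt2}\,e^{\pi^2/(4z)},
\]
so the prefactor is $z^0$.

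Next I would handle the minor arc $\Delta t\le|y|\le\pi$. There I would bound $|f_2/f_1^2|\le \exp\bigl(\tfrac{\pi^2}{4t}-\tfrac{c}{t}\bigr)$ by the standard Wright estimate, using $\log|P(q)|\le \log P(|q|)-c/t$ for the overpartition product. I would bound $|S(q)|$ polynomially in $1/t$ by taking absolute values termwise. For this I need $|(-q;q)_u|^{-1}$ not to be too large when $q$ is away from $1$. I expect this to be the main obstacle, since $(-q;q)_u$ can be small near $q=-1$. The first thing I would try is to rewrite $S(q)$ back as $\frac{1}{(-q;q)_\infty}\sum_j(2j+1)4^j q^{j(2j+1)}(1-q^{2j+1})(-q^{2j+2};q)_\infty$. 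I would then bound $|(-q^{u+1};q)_\infty|\le(-|q|^{u+1};|q|)_\infty$, and bound $F$ on the minor arc directly as
\[
\frac{1}{|(q;q)_\infty|}\cdot\sum_j (2j+1)4^j|q|^{j(2j+1)}\,2\,(-|q|;|q|)_\infty .
\]
Here $1/|(q;q)_\infty|$ supplies the exponential saving away from $q=1$. The $j$-sum grows at most like $e^{O(1/t)}$ with a small constant (its maximum is at $j\asymp\log(1/t)/t$, giving roughly $e^{(\log 2)^2/(2t)}$). So I must check that this loss is beaten by the saving in $1/|(q;q)_\infty|$. If the saving is insufficient, I would instead split the $j$-sum and keep $1/(-q;q)_{2j+1}$ only for small $j$.

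Finally I would apply Wright's theorem (the version of the circle method with prefactor $\lambda z^{B}e^{A/z}$), with $A=\pi^2/4$, $B=0$ and $\lambda=1/(2\sqrt2)$. It yields
\[
[q^n]F\sim\frac{\lambda\,A^{B/2+1/4}}{2\sqrt\pi\,n^{B/2+3/4}}\,e^{2\sqrt{An}}=\frac{1}{2\sqrt2}\cdot\frac{\sqrt{\pi/2}}{2\sqrt\pi}\,n^{-3/4}e^{\pi\sqrt n}=\frac{\sqrt2}{16}\,n^{-3/4}e^{\pi\sqrt n},
\]
which is the statement of Theorem~\ref{thm:asymp}.
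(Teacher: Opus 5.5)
Your strategy is the paper's. Its identity $\prod_{k\ge K+1}(1+e^{-kt})=\frac{f_2/f_1}{\prod_{k\le K}(1+e^{-kt})}$ is your factorization $F=\frac{f_2}{f_1^2}S$, and its Gaussian sum $\frac{\sqrt\pi}{2\sqrt t}$ is your $S(e^{-z})\sim\frac{\sqrt\pi}{2}z^{-1/2}$. The paper works only on the real axis and then appeals to Wright's method, so your major/minor-arc set-up goes beyond what is written there.

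However, you reach the constant, which is the real content of the statement, through two errors that cancel.
\begin{itemize}
\item First, $\frac{f_2}{f_1^2}=\frac12\sqrt{z/\pi}\,e^{\pi^2/(4z)}\bigl(1+O(e^{-c/|z|})\bigr)$, not $\sqrt{z/(2\pi)}\,e^{\pi^2/(4z)}$. You dropped the factor $2^{-1/2}$ in $(-q;q)_\infty\sim 2^{-1/2}e^{\pi^2/(12z)}$. As a check, with your prefactor Wright's formula would give $\overline{p}(n)\sim\frac{1}{4\sqrt2\,n}e^{\pi\sqrt n}$ instead of the known $\frac{1}{8n}e^{\pi\sqrt n}$. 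Hence $F(e^{-z})\sim\frac14 e^{\pi^2/(4z)}$, exactly as in the paper, so $\lambda=\frac14$.
\item Second, your last line contains an arithmetic slip: $\frac{1}{2\sqrt2}\cdot\frac{\sqrt{\pi/2}}{2\sqrt\pi}=\frac18$, not $\frac{\sqrt2}{16}$.
\end{itemize}
With $\lambda=\frac14$ the same formula gives $\frac14\cdot\frac{1}{2\sqrt2}=\frac{\sqrt2}{16}$, so the repair is immediate. As written, though, your derivation yields the constant $\frac18$. A minor point: the linear terms do not cancel. The exponent is $-\frac{zu^2}{4}+\frac{3zu}{4}$, which your $O(u|z|)$ absorbs.

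Two analytic steps also need repair.

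\emph{Tail bound on the major arc.} The bound $|(-q;q)_u|^{-1}\le 2^{-u}e^{-ctu^2}$ is false already for real $q$, since $(-q;q)_u<2^u$. In fact $1/(-r;r)_u\approx 2^{-u}e^{tu^2/4}$ for $u=o(1/t)$, where $r=|q|=e^{-t}$, so the Gaussian decay must come from $|q|^{(u^2-u)/2}$. What is true, via $\ln(1+e^{-s})\ge\ln 2-\frac s2$, is $\frac{2^u r^{(u^2-u)/2}}{(-r;r)_u}\le e^{-tu^2/4+3tu/4}$. For complex $q$ with $|y|\le\Delta t$ you must also absorb $\prod_{k\le u}\frac{1+r^k}{|1+q^k|}\le e^{C_\Delta/t}$, with $C_\Delta\to0$ as $\Delta\to0$. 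This works if you use the Taylor expansion for $u\le\epsilon/t$ and take $\Delta$ small in terms of $\epsilon$.

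\emph{Minor arc.} Your displayed bound discards $1/(-|q|;|q|)_{2j+1}$ and so loses $e^{(\ln 2)^2/(2t)}$; the maximum is at $j\approx\frac{\ln 2}{2t}$, not $\log(1/t)/t$. Meanwhile $1/|(q;q)_\infty|$ saves only about $\frac{\pi^2}{6t}\cdot\frac{\Delta^2}{1+\Delta^2}$ near $|y|=\Delta t$. So that bound needs $\Delta$ bounded away from $0$, which you would then have to reconcile with the cone width allowed on the major arc. There is no need for this. Keep the bound you first wrote, $|(-q^{2j+2};q)_\infty|\le(-r^{2j+2};r)_\infty=\frac{(-r;r)_\infty}{(-r;r)_{2j+1}}$. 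Then the $j$-sum is at most $\sum_u u\,e^{-tu^2/4+3tu/4}=O(1/t)$, so
\[
|F(q)|\ll t^{-1}\,\frac{(-r;r)_\infty}{|(q;q)_\infty|}\ll e^{(\pi^2/4-c)/t}
\]
with $c=c(\Delta)>0$ for every fixed $\Delta>0$, and Wright's theorem applies.
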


	The remainder of this paper is organized as follows: In Section 2, we recall some preliminary definitions and results required in the proofs of our main theorems, which are presented in Sections 4 and 5. Section 3 is devoted to obtaining the generating function of $\overline{\sigma_o\mathrm{mex}}(n)$, while the proof of Theorem \ref{thm:asymp} is given in Section 6.

\section{Preliminaries}
\label{sec:prelim}

In this section, we present the necessary $q$-series definitions and results, together with the modular-forms background required for Corollary~\ref{cor:phisq} in Section~\ref{sec:infinite}. For a detailed account of these results, we refer the reader to \cite{Berndt2006,Ono2004}.

\indent We first recall Ramanujan's general theta function $f(a,b)$ \cite[p.~35]{Berndt1991}, defined by
\begin{equation}
	f(a,b):=\sum_{n=-\infty}^{\infty}
	a^{\frac{n(n+1)}{2}}
	b^{\frac{n(n-1)}{2}},
	\qquad |ab|<1.
\end{equation}
The product representation of $f(a,b)$ follows from the Jacobi triple product identity \cite[p.~35]{Berndt1991}:
\begin{equation}
	f(a,b)=(-a,ab)_\infty\,(-b,ab)_\infty\,(ab,ab)_\infty.
\end{equation}
Two special cases of Ramanujan's theta functions are defined by
\begin{align}
	\varphi(q) &:=
	\sum_{n=-\infty}^{\infty} q^{n^2}
	= \frac{f_2^5}{f_1^2 f_4^2},
	\label{eq:phi} \\
	\varphi(-q) &:=
	\sum_{n=-\infty}^{\infty} (-1)^n q^{n^2}
	= \frac{f_1^2}{f_2}.
	\label{eq:phiminus}
\end{align}

\begin{Definition}
The Klein four-group is a finite abelian group with four elements in which each element is its inverse. Klein's four-group is denoted by $K_4$, in fact $K_4$ is the smallest non-cyclic group.
\end{Definition}
For example, $S=\{1,3,5,7\}$ under multiplication modulo $8$ forms Klein's four-group.\\
\indent For a positive integer $N$, $\Gamma_0(N)$ denotes the subgroup of $SL_2(\mathbb{Z})$ defined by
\begin{align*}
	\Gamma_0(N):=
	\left\{
	\begin{bmatrix}
		a & b \\
		c & d
	\end{bmatrix} \in SL_2(\mathbb{Z})
	\,:\, c \equiv 0 \pmod N
	\right\}.
\end{align*}
$\Gamma_0(N)$ acts on $\mathbb{H}=\{z: \operatorname{Im}(z)>0\}$ by
$\gamma z:=\dfrac{az+b}{cz+d}$ for $\gamma=\begin{bmatrix}a&b\\c&d\end{bmatrix}\in\Gamma_0(N)$.
If $\chi$ is a Dirichlet character modulo $N$ and $k$ is a positive integer, a meromorphic function $f(z)$ on $\mathbb{H}$ satisfying
$f(\gamma z)=\chi(d)(cz+d)^kf(z)$ for all $\gamma\in\Gamma_0(N)$, $z\in\mathbb{H}$, is a modular form of weight $k$ and Nebentypus character $\chi$ with respect to $\Gamma_0(N)$; if $f(z)$ is holomorphic on $\mathbb{H}$ and at every cusp of $\Gamma_0(N)$, it is a \emph{holomorphic} modular form, and the space of such forms is denoted $M_k(\Gamma_0(N),\chi)$.\\
\indent The Dedekind's eta-function $\eta(z)$ defined by
\begin{align}\label{eq:eta-def}
	\eta(z):=q^{1/24}(q;q)_\infty=q^{1/24}\prod_{n=0}^\infty(1-q^{n}), \text{ where } q=e^{2\pi iz},
\end{align}
which is a non-vanishing holomorphic function on $\mathbb{H}=\{z: \operatorname{Im}(z)>0\}$.\\
Further, $\eta(z)$ satisfies the following modular transformations \cite[Theorem~1.61]{Ono2004}
\begin{align}
\eta(z+1)&=e^{\pi i/12}\eta(z)\label{tr1}\\ \eta\!\left(-\frac1z\right)&=\sqrt{-iz}\,\eta(z)\label{tr2}.
\end{align}
A function is called an eta-quotient if it is of the form
\begin{align}
	f(z)=\prod_{\delta\mid N}\eta(\delta z)^{r_\delta},
\end{align}
where $N$ is a positive integer and $r_\delta$ is an integer. The following criterion identifies when an eta-quotient is a holomorphic modular form.
\begin{theorem}[{{\cite[Theorem~1.64 and 1.65]{Ono2004}}}]\label{t 2.1}
	If $f(z)=\prod_{\delta\mid N}\eta(\delta z)^{r_\delta}$ is an eta-quotient with
	$k=\frac12\sum_{\delta\mid N}r_\delta\in\mathbb{Z}$, and satisfies
	\begin{align}\label{2.3}
		\sum_{\delta\mid N}\delta\, r_\delta\equiv0\pmod{24},
	\end{align}
	\begin{align}\label{2.4}
		\sum_{\delta\mid N}\frac{N}{\delta}\,r_\delta\equiv0\pmod{24},
	\end{align}
	\begin{align}\label{2.5}
		\sum_{\delta\mid N}\frac{\gcd(d,\delta)^2 r_\delta}{\delta}\ge0,\qquad\text{for every } d\mid N,
	\end{align}
	then $f(z)\in M_k(\Gamma_0(N),\chi)$, where
	$\chi(d):=\left(\dfrac{(-1)^k\prod_{\delta\mid N}\delta^{r_\delta}}{d}\right)$.
\end{theorem}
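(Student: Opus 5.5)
This is the classical criterion of Gordon--Hughes, Newman and Ligozat, and I would prove it in two independent parts: first the modular transformation law under $\Gamma_0(N)$, then holomorphy at the cusps. Since $\eta$ is holomorphic and non-vanishing on $\mathbb{H}$, any eta-quotient $f(z)=\prod_{\delta\mid N}\eta(\delta z)^{r_\delta}$ is automatically holomorphic and non-vanishing on $\mathbb{H}$. So the only issues are the automorphy factor and the behaviour at the cusps.

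\emph{Transformation law.} Fix $\gamma=\begin{bmatrix}a&b\\c&d\end{bmatrix}\in\Gamma_0(N)$; we may take $c>0$, since $c=0$ reduces to translations, which \eqref{2.3} handles. For each $\delta\mid N$ we have $\delta\,\gamma z=\gamma_\delta(\delta z)$ with
\[
\gamma_\delta=\begin{bmatrix}a&b\delta\\ c/\delta&d\end{bmatrix}\in SL_2(\mathbb{Z}),
\]
which is integral because $\delta\mid N\mid c$. The general transformation formula for $\eta$ under $SL_2(\mathbb{Z})$ is obtained from \eqref{tr1}, \eqref{tr2} by the standard continued-fraction argument, or quoted in its explicit Petersson--Rademacher form. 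It gives
\[
\eta(\gamma_\delta w)=\varepsilon(\gamma_\delta)\,(c w/\delta+d)^{1/2}\,\eta(w)
\]
with an explicit $24$th root of unity $\varepsilon$. Multiplying over $\delta$ produces the factor $(cz+d)^{k}$, using $k=\tfrac12\sum r_\delta$ and the fact that the half-integral powers combine consistently. It also produces a residual factor $\prod_\delta\varepsilon(\gamma_\delta)^{r_\delta}$.

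For $d$ odd I would use the closed form
\[
\varepsilon=\Bigl(\tfrac{c}{d}\Bigr)i^{(1-d)/2}\exp\!\Bigl(\tfrac{\pi i}{12}\bigl(bd(1-c^2)+c(a+d-3)\bigr)\Bigr),
\]
applied with $(b,c)$ replaced by $(b\delta,c/\delta)$. The exponential contributions then collect into linear combinations of $\sum\delta r_\delta$ and $\sum (c/\delta)r_\delta=(c/N)\sum (N/\delta)r_\delta$. These vanish modulo $24$ by \eqref{2.3} and \eqref{2.4}, after using $ad\equiv1\pmod{c}$ and $d^2\equiv1\pmod 8$, $d^2\equiv 1\pmod 3$ when $\gcd(d,6)=1$. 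The Jacobi symbols $\prod(\tfrac{c/\delta}{d})^{r_\delta}$ and the powers of $i$ then combine, by quadratic reciprocity and $\sum r_\delta=2k$, into $\bigl(\tfrac{(-1)^k\prod\delta^{r_\delta}}{d}\bigr)=\chi(d)$. The case $d$ even (forcing $c$ odd) is treated with the companion formula for odd $c$, or reduced to the previous case by multiplying $\gamma$ by a suitable translation.

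\emph{Cusps.} Every cusp of $\Gamma_0(N)$ is equivalent to some $c/d$ with $d\mid N$. Choose $M=\begin{bmatrix}c&*\\d&*\end{bmatrix}\in SL_2(\mathbb{Z})$ sending $\infty$ to $c/d$. For each $\delta$, write $\delta M=M'_\delta\begin{bmatrix}g&*\\0&\delta/g\end{bmatrix}$ with $g=\gcd(d,\delta)$ and $M'_\delta\in SL_2(\mathbb{Z})$. Then $\eta(\delta Mz)$ equals, up to a nonzero constant and a power of the automorphy factor, $\eta\bigl((g z+*)/(\delta/g)\bigr)$. This function has leading term $q^{g^2/(24\delta)}$ in the local parameter at $\infty$. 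Hence the order of $f$ at $c/d$ is a positive multiple, namely $\frac{N}{24\gcd(d,N/d)d}$, of $\sum_\delta \gcd(d,\delta)^2r_\delta/\delta$, which is Ligozat's formula. Condition \eqref{2.5} makes every such order nonnegative, so $f\in M_k(\Gamma_0(N),\chi)$.

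I expect the main obstacle to be the root-of-unity bookkeeping in the transformation step, namely showing that the $\varepsilon$-factors collapse exactly to $\chi(d)$ with no leftover $24$th root of unity. I would handle this by working separately modulo $8$ and modulo $3$, and by splitting into the cases $\gcd(d,6)=1$ and $\gcd(d,6)>1$. The cusp computation is comparatively routine.
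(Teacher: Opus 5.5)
The paper gives no proof of this statement. It quotes Ono's Theorems~1.64--1.65 (Gordon--Hughes--Newman and Ligozat), and your outline is the standard argument behind that citation. The cusp half is correct. Factoring $\begin{bmatrix}\delta&0\\0&1\end{bmatrix}M$ with $g=\gcd(d,\delta)$ gives $f|_kM=C\prod_\delta\eta(A_\delta z)^{r_\delta}$, where $A_\delta=\begin{bmatrix}g&*\\0&\delta/g\end{bmatrix}$. The leading exponent is $\frac1{24}\sum_\delta g^2r_\delta/\delta$, and the width $N/\gcd(d^2,N)=N/(d\gcd(d,N/d))$ turns this into Ligozat's formula.

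The concrete error is in the transformation step: the closed form you quote for $\varepsilon$ is not the eta multiplier. Take $\gamma=\begin{bmatrix}0&-1\\1&3\end{bmatrix}$. Then \eqref{tr1} and \eqref{tr2} give $\eta(\gamma z)=\sqrt{-i(z+3)}\,\eta(z+3)=\sqrt{z+3}\,\eta(z)$, so $\varepsilon=1$, whereas your formula returns $\bigl(\frac13\bigr)i^{-1}=-i$. For $c>0$, $d>0$ odd, and the principal branch of $\sqrt{cz+d}$, the correct multiplier is
\[
\varepsilon(\gamma)=\Bigl(\frac cd\Bigr)\,i^{(d-1)/2}\exp\Bigl(\frac{\pi i}{12}\bigl(bd(1-c^2)+c(a-2d)\bigr)\Bigr),
\]
and yours differs from it by the factor $\exp\bigl(\frac{\pi i}{4}(c-2)(d-1)\bigr)$.

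As it happens, your final answer survives. After $c\mapsto c/\delta$ and taking the product of $r_\delta$-th powers, the discrepancies multiply to $\exp\bigl(\frac{\pi i(d-1)}{4}\bigl(c\sum_\delta r_\delta/\delta-4k\bigr)\bigr)=1$. This is because $c\sum_\delta r_\delta/\delta=\frac cN\sum_\delta\frac N\delta r_\delta\in24\mathbb Z$ by \eqref{2.4}, and $d-1$ is even. Still, as written the argument rests on a false identity and has to be rerun with the correct formula. When you do, the ``main obstacle'' you anticipate disappears. The exponents sum to $bd\sum_\delta\delta r_\delta+(a-2d-bcd)\,c\sum_\delta r_\delta/\delta\in24\mathbb Z$ directly by \eqref{2.3} and \eqref{2.4}. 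The identities $\bigl(\frac{c/\delta}{d}\bigr)=\bigl(\frac cd\bigr)\bigl(\frac\delta d\bigr)$, $\bigl(\frac cd\bigr)^{2k}=1$ and $\prod_\delta i^{(d-1)r_\delta/2}=\bigl(\frac{-1}{d}\bigr)^k$ then give $\chi(d)$ immediately. No splitting modulo $8$ and $3$, no $d^2\equiv1$, and no reciprocity are needed.

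One point you should add concerns the translation reduction, where you replace $d$ by $d+mc$ to make it positive and odd. This needs $\chi(d+mc)=\chi(d)$, i.e.\ that $\chi$ is periodic modulo $N$. It does hold under the hypotheses; for instance, for odd $N$, condition \eqref{2.3} read modulo $4$ forces $(-1)^k\prod_\delta\delta^{r_\delta}\equiv1\pmod 4$, so $\chi$ is a character modulo a divisor of $N$ even at even arguments. But it must be checked, not assumed.
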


\section{Generating Function for $	\overline{\sigma_o\mathrm{mex}}(n)$ }
\label{sec:gf}

In this section, we establish the generating function for $	\overline{\sigma_o\mathrm{mex}}(n)$.

\begin{proof}[Proof of Theorem \ref{thm:gf}]
Let $F_{2j+1}(q)$ denote the generating function for $	\overline{\sigma_o\mathrm{mex}}(n)$, we have
	\begin{align*}
		F_{2j+1}(q) &=
		\underbrace{\prod_{m=1}^{2j}
			\frac{2q^m}{1-q^m}}_{\text{values }
			1,\ldots,2j \text{ forced}}
		\cdot\;
		\underbrace{1}_{\text{value }
			2j+1 \text{ absent}}
		\cdot\;
		\underbrace{\prod_{m=2j+2}^{\infty}
			\frac{1+q^m}{1-q^m}}_{\text{values }
			> 2j+1 \text{ free}}.
		\\
	& =\frac{4^j \cdot q^{j(2j+1)}}{(q;q)_{2j}} \cdot \frac{(-q^{2j+2};q)_\infty}
	{(q^{2j+2};q)_\infty} \\
	& =\frac{4^j \cdot q^{j(2j+1)}}{(q;q)_{2j}} \cdot  (-q^{2j+2};q)_\infty \cdot
	\frac{(q;q)_{2j}(1-q^{2j+1})}
	{(q;q)_\infty}\\
	&=	\frac{4^j q^{j(2j+1)}
		(1-q^{2j+1})(-q^{2j+2};q)_\infty}
	{(q;q)_\infty}.
	\end{align*}
Therefore, we obtain
	\begin{align*}
		\sum_{n=0}^{\infty}	\overline{\sigma_o\mathrm{mex}}(n)q^n
		&= \sum_{j=0}^{\infty}
		(2j+1)\cdot F_{2j+1}(q) \\[4pt]
		&= \frac{1}{(q;q)_\infty}
		\sum_{j=0}^{\infty}(2j+1)\cdot 4^j
		\cdot q^{j(2j+1)}(1-q^{2j+1})
		(-q^{2j+2};q)_\infty,
	\end{align*}
	since $	\overline{\sigma_o\mathrm{mex}}(n)$ weights each overpartition
by its odd mex value, this completes the proof.
\end{proof}
\begin{remark}
For $n = 3$, the eight overpartitions
of $3$ with their mex values are shown in the table below. Clearly, $	\overline{\sigma_o\mathrm{mex}}(3) = 2(1) + 4(3) + 2(0) = 14$.
\begin{table}[h]
	\centering
	\setcellgapes{4pt} 
	\makegapedcells
	\begin{tabular}{|c|c|c|c|}
		\hline
		Overpartition & Parts present & $\mathrm{mex}$ & $\mathrm{mex}_0$\\
		\hline
		$3$, $\bar{3}$ & $\{3\}$ & $1$ & $1$\\
		\hline
		$2+1$, $\bar{2}+1$, $2+\bar{1}$, $\bar{2}+\bar{1}$ & $\{1,2\}$ & $3$ & $3$\\
		\hline
		$1+1+1$, $\bar{1}+1+1$ & $\{1\}$ & $2$ & $0$\\
		\hline
	\end{tabular}
\end{table}
\end{remark}

\section{Proof of Corollary~\ref{thm:mod4equiv}, Corollary~\ref{thm:parity} and Theorem~\ref{thm:main}}
\label{sec:main}
We first prove the following lemmas which provides an essential argument to prove our results.

\begin{lemma}
	\label{lem:mod4}
	For all $n\ge0$, we have
	\begin{equation}
		\sum_{n=0}^{\infty} 	\overline{\sigma_o\mathrm{mex}}(n)\, q^n
		\equiv \frac{(-q^2;q)_\infty}{(q^2;q)_\infty}
		=: G(q) \pmod{4}.
		\label{eq:mod4reduce}
	\end{equation}
\end{lemma}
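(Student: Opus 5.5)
Starting from Theorem~\ref{thm:gf}, the generating function is a sum over $j\ge0$ of terms $(2j+1)4^j q^{j(2j+1)}(1-q^{2j+1})(-q^{2j+2};q)_\infty/(q;q)_\infty$. Every power series appearing here has integer coefficients; in particular $1/(q;q)_\infty$ and $(-q^{2j+2};q)_\infty$ lie in $\mathbb{Z}[[q]]$. So for every $j\ge1$ the $j$-th summand is $4^j$ times an integral power series, and is therefore $\equiv 0 \pmod 4$ coefficientwise. Since the $j$-th summand begins at $q^{j(2j+1)}$, each coefficient of $q^n$ receives contributions from only finitely many $j$. Hence reducing modulo $4$ term by term is legitimate, and only the $j=0$ term survives.

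The $j=0$ term equals
\[
\frac{(1-q)(-q^{2};q)_\infty}{(q;q)_\infty}.
\]
Using $(q;q)_\infty=(1-q)(q^2;q)_\infty$, the factor $1-q$ cancels exactly, and we obtain $(-q^2;q)_\infty/(q^2;q)_\infty=G(q)$. This is an exact identity, not merely a congruence, so the lemma follows. As a consistency check, $G(q)$ is the generating function for overpartitions with all parts at least $2$, since each value $m\ge2$ contributes $(1+q^m)/(1-q^m)$. This is consistent with Corollary~\ref{thm:mod4equiv}.

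No step is a real obstacle. The only point needing care is justifying coefficientwise reduction modulo $4$ of an infinite sum of power series. That is handled by the finiteness observation above: the coefficient of $q^n$ involves only the $j$ with $j(2j+1)\le n$, each integral with the factor $4^j$ for $j\ge1$.
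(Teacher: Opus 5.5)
Your proof is correct and takes the same route as the paper: apply Theorem~\ref{thm:gf}, discard the $j\ge1$ summands because $4^j\equiv0\pmod4$, and observe that the $j=0$ summand is exactly $G(q)$. You also spell out three points the paper leaves implicit, namely the integrality of each summand, the fact that each coefficient of $q^n$ receives only finitely many contributions, and the cancellation $(q;q)_\infty=(1-q)(q^2;q)_\infty$.
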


\begin{proof}
Thanks to Theorem \ref{thm:gf} and the congruence, $4^j \equiv 0 \pmod{4}$, for $j \geq 1$, we complete the proof of Lemma \ref{lem:mod4}.
\end{proof}

\begin{lemma}
	\label{lem:keyid}
	We have
	\begin{equation}
		G(q) \equiv \frac{2}{1-q} - \varphi(q) \pmod{4}.
		\label{eq:keyid}
	\end{equation}
\end{lemma}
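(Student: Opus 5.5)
The plan is to reduce $G(q)$ to the overpartition generating function and then use the fact that $\varphi(q)$, $\varphi(-q)$ and their inverses all agree modulo $4$. We work throughout in $\mathbb{Z}[[q]]$. There every series with constant term $1$ is invertible, in particular $1\pm q$ and $\varphi(\pm q)$, so congruences modulo $4$ may be multiplied and inverted freely.

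\emph{Step 1 (rewrite $G$).} Write $(-q^2;q)_\infty=(-q;q)_\infty/(1+q)$ and $(q^2;q)_\infty=(q;q)_\infty/(1-q)$. This gives
\[
G(q)=\frac{1-q}{1+q}\cdot\frac{(-q;q)_\infty}{(q;q)_\infty}=\frac{1-q}{1+q}\cdot\frac{f_2}{f_1^2}=\frac{1-q}{1+q}\cdot\frac{1}{\varphi(-q)},
\]
where the last equality uses \eqref{eq:phiminus}.

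\emph{Step 2 (invert $\varphi(-q)$ modulo $4$).} Put $S=\sum_{n\ge1}(-1)^nq^{n^2}$, so that $\varphi(-q)=1+2S$. Expanding the geometric series gives
\[
\frac{1}{1+2S}=1-2S+4S^2-\cdots\equiv 1-2S\equiv 1+2S \pmod 4,
\]
because $4S\equiv0$. Moreover $2S\equiv 2\sum_{n\ge1}q^{n^2}\pmod 4$, since $2\bigl((-1)^n-1\bigr)\equiv0\pmod 4$. Hence
\[
\frac{1}{\varphi(-q)}\equiv\varphi(q)\pmod 4,
\qquad\text{and so}\qquad
G(q)\equiv\frac{1-q}{1+q}\,\varphi(q)\pmod 4 .
\]

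\emph{Step 3 (compare with the target).} It suffices to show that
\[
\varphi(q)\Bigl(\frac{1-q}{1+q}+1\Bigr)=\frac{2\varphi(q)}{1+q}\equiv\frac{2}{1-q}\pmod 4 .
\]
Since $\varphi(q)=1+2\sum_{n\ge1}q^{n^2}$, we have $2\varphi(q)\equiv 2\pmod 4$, so the left side is $\equiv 2/(1+q)$. Finally,
\[
\frac{2}{1+q}-\frac{2}{1-q}=\frac{-4q}{1-q^2}\equiv0\pmod 4 .
\]
Rearranging gives $G(q)\equiv \frac{2}{1-q}-\varphi(q)\pmod 4$.

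No step is a real obstacle; the only point needing care is justifying the inversions modulo $4$ in Step 2. This is legitimate because each series involved has integer coefficients and constant term $1$, so the geometric expansion converges $q$-adically in $\mathbb{Z}[[q]]$.
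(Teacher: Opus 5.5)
Your proof is correct, but it takes a different route from the paper's.

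The paper works factor by factor. It writes $\frac{1+q^n}{1-q^n}=1+2\sum_{k\ge1}q^{nk}$ for each $n\ge2$ and discards all cross terms modulo $4$, obtaining $G(q)\equiv 1+2\sum_{N\ge1}\bigl(d(N)-1\bigr)q^N$. It then reduces the claim to the parity fact that $d(N)$ is odd exactly when $N$ is a perfect square, using the involution $n\leftrightarrow N/n$ on divisors.

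You instead split off the $n=1$ factors to write $G(q)=\frac{1-q}{1+q}\cdot\frac{1}{\varphi(-q)}$. You then use that $1+2S$ is its own inverse modulo $4$, so the divisor function never enters.

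The cost is that you rely on the product formula $\varphi(-q)=f_1^2/f_2$ from \eqref{eq:phiminus}, which is a consequence of the Jacobi triple product. The paper's argument, by contrast, is fully elementary. In fact, the paper's divisor-parity computation applied to the full product $\frac{(-q;q)_\infty}{(q;q)_\infty}=\prod_{n\ge1}\frac{1+q^n}{1-q^n}$ gives, without the triple product, the same congruence $\frac{(-q;q)_\infty}{(q;q)_\infty}\equiv\varphi(q)\pmod 4$ that your Steps 1 and 2 obtain through it.

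In return, your route is shorter and yields the multiplicative form $G(q)\equiv\frac{1-q}{1+q}\,\varphi(q)\pmod 4$. The relations in Lemma~\ref{lk4}, namely $G^2\equiv1$ and $G\varphi\equiv\frac{1+q}{1-q}$, follow from it at once. This is because $\varphi(q)^2\equiv1$ and $\frac{1-q}{1+q}\equiv\frac{1+q}{1-q}\pmod 4$, the difference being $\frac{4q}{1-q^2}$.
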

\begin{proof}
For $n\ge2$, we have the exact identity
\[
\frac{1+q^n}{1-q^n}=1+\frac{2q^n}{1-q^n}=1+2\sum_{k\ge1}q^{nk}.
\]
Multiplying these factors together for $n\ge2$ and reducing modulo $4$ -- any cross-term between the ``$2(\cdot)$'' parts of two distinct factors contributes a multiple of $4$, and at each fixed power of $q$ this is a finite computation -- we obtain
\[
G(q)\equiv 1+2\sum_{n\ge2}\sum_{k\ge1}q^{nk}\pmod4.
\]
For fixed $N\ge1$, the double sum counts pairs $(n,k)$ with $n\ge2$, $k\ge1$, $nk=N$, i.e.\ divisors $n\ge2$ of $N$; there are $d(N)-1$ of these, where $d(N)$ denotes the number of positive divisors of $N$. Hence
\[
G(q)\equiv1+2\sum_{N\ge1}\bigl(d(N)-1\bigr)q^N
=1-\frac{2q}{1-q}+2D(q)\pmod4,\qquad D(q):=\sum_{N\ge1}d(N)q^N.
\]
Since $1-\dfrac{2q}{1-q}=3-\dfrac{2}{1-q}$, this gives
\[
G(q)\equiv3-\frac2{1-q}+2D(q)\pmod4,
\]
so the desired congruence $G(q)\equiv\dfrac2{1-q}-\varphi(q)\pmod4$ is equivalent to
\[
2D(q)+\varphi(q)\equiv1\pmod4,
\]
using that $4/(1-q)\equiv0\pmod4$. Since $\varphi(q)=1+2\sum_{n\ge1}q^{n^2}$, this reduces further to
\[
D(q)\equiv\sum_{n\ge1}q^{n^2}\pmod2.
\]
This is a classical fact: the divisors of $N$ pair up as $n\leftrightarrow N/n$, an involution on the divisor set of $N$ with a fixed point exactly when $n=N/n=\sqrt N$. Hence $d(N)$ is odd if and only if $N$ is a perfect square, which is precisely the coefficient of $q^N$ in $\sum_{n\ge1}q^{n^2}$. This proves $D(q)\equiv\sum_{n\ge1}q^{n^2}\pmod2$, and combining the above completes the proof.
\end{proof}
 We now provide the proof of Corollary \ref{thm:mod4equiv}.
\begin{proof}[Proof of Corollary~\ref{thm:mod4equiv}]
	By Lemma~\ref{lem:mod4}, we have
	\[
	\sum_{n=0}^{\infty}	\overline{\sigma_o\mathrm{mex}}(n)q^n
	\equiv\prod_{m=2}^{\infty}\frac{1+q^m}{1-q^m}\pmod{4}=  \sum_{n=0}^{\infty}
	\overline{p}_{\geq 2}(n)\, q^n,
	\]
	where $\overline{p}_{\geq 2}(n)$ denotes the
	number of overpartitions of $n$ with all
	parts at least $2$. Extracting the coefficients of $q^n$ on both the sides, we complete the proof.
\end{proof}

\begin{proof}[Proof of Corollary~\ref{thm:parity}]
	Thanks to Corollary~\ref{thm:mod4equiv}, we have
	\[
	\overline{\sigma_o\mathrm{mex}}(n) \equiv \overline{p}_{\geq 2}(n)
	\pmod{2}.
	\]
	To complete the proof of Corollary \ref{thm:parity}, it is enough show for all $n \geq 1$,
	$\overline{p}_{\geq 2}(n) \equiv 0 \pmod{2}$, which immediately follows from the fact, $(1 + q^m) \equiv (1 - q^m) \pmod{2}$, for each $m\ge2$.
\end{proof}

\begin{proof}[Proof of Theorem~\ref{thm:main}]
	From Lemma~\ref{lem:mod4} and Lemma~\ref{lem:keyid}, we have
	\[
		\sum_{n=0}^{\infty}\overline{\sigma_o\mathrm{mex}}(n)q^n \equiv\frac{2}{1-q} - \varphi(q)
		\pmod{4}.
	\]
	Since $\varphi(q)=1+2\sum_{n\ge1}q^{n^2}$, the coefficient of $q^0$ on the right side is $2-1=1$, which matches $\overline{\sigma_o\mathrm{mex}}(0)=1$.
	For $n\ge1$, extracting the coefficient of $q^n$ from both sides of the above congruence, we obtain
	\[
	\overline{\sigma_o\mathrm{mex}}(n)\equiv 2-
	\begin{cases}
		2 & \text{if } n \text{ is a
			perfect square}, \\
		0 & \text{otherwise},
	\end{cases}
\pmod{4}	\]
	This completes the proof of Theorem \ref{thm:main}.
\end{proof}

\section{Proof of Theorem \ref{thm:conv}-\ref{thm:selfconv} }
\label{sec:infinite}
We first prove the following lemmas. Furthermore, the structural observation in Lemma \ref{lk4} is the key to establish the theorems in this section. We begin by identifying $\varphi(q)^2$ as a weight-one eta-quotient, which puts Jacobi's two-square theorem on a modular-forms footing.

\begin{corollary}\label{cor:phisq}
The eta-quotient $\varphi(q)^2=\dfrac{f_2^{10}}{f_1^4f_4^4}$ is a holomorphic
modular form of weight $1$ on $\Gamma_0(4)$ with Nebentypus character
$\chi_{-4}$; that is, $\varphi(q)^2\in M_1(\Gamma_0(4),\chi_{-4})$.
Consequently,
\[
\varphi(q)^2=1+4\sum_{n=1}^{\infty}\bigl(d_1(n)-d_3(n)\bigr)q^n,
\]
where $d_1(n)$ and $d_3(n)$ denote the number of divisors of $n$ that are
$\equiv1$ and $\equiv3\pmod4$ respectively.
\end{corollary}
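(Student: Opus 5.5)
The proof has two parts: first show that the eta-quotient is a holomorphic weight-one form using Theorem~\ref{t 2.1}, and then identify it inside the two-dimensional space $M_1(\Gamma_0(4),\chi_{-4})$ by matching finitely many coefficients.

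\textbf{Eta-quotient and holomorphy.} Squaring \eqref{eq:phi} gives $\varphi(q)^2=f_2^{10}/(f_1^4f_4^4)$. Since $\sum_\delta \delta r_\delta=-4+20-16=0$, the prefactor $q^{\sum \delta r_\delta/24}$ is $1$. Hence $\varphi(q)^2=\eta(2z)^{10}\eta(z)^{-4}\eta(4z)^{-4}$ exactly, with no extra power of $q$. Take $N=4$ and $(r_1,r_2,r_4)=(-4,10,-4)$. The weight is $k=\tfrac12(-4+10-4)=1$. We check the hypotheses of Theorem~\ref{t 2.1}:
\begin{itemize}
\item \eqref{2.3}: $\sum\delta r_\delta=0$.
\item \eqref{2.4}: $\sum (4/\delta)r_\delta=-16+20-4=0$.
\item \eqref{2.5}, the order at the cusps: for $d=1$ we get $-4+5-1=0$; for $d=2$ we get $-4+20-4=12\ge 0$; for $d=4$ we get $-4+20-16=0$.
\end{itemize}
All conditions hold, so the form is holomorphic. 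The character is $\chi(d)=\bigl(\tfrac{-\,2^{10}4^{-4}}{d}\bigr)=\bigl(\tfrac{-4}{d}\bigr)$, since $2^{10}4^{-4}=2^2$ is a square. Thus $\varphi(q)^2\in M_1(\Gamma_0(4),\chi_{-4})$.

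\textbf{Identification.} This is the main obstacle, because Theorem~\ref{t 2.1} only gives membership in the space and says nothing about the coefficients. I would first construct the weight-one Eisenstein series
\[
E(q)=\tfrac14+\sum_{n\ge1}\Bigl(\sum_{d\mid n}\chi_{-4}(d)\Bigr)q^n,
\]
whose $n$-th coefficient is exactly $d_1(n)-d_3(n)$. It lies in $M_1(\Gamma_0(4),\chi_{-4})$ by the standard Eisenstein series theory in \cite{Ono2004}. Next I would bound the dimension of $M_1(\Gamma_0(4),\chi_{-4})$. The index of $\Gamma_0(4)$ is $6$, so the Sturm bound is $\tfrac{k}{12}\cdot 6=\tfrac12$. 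Two forms in the space that agree in their coefficients of $q^0$ and $q^1$ are therefore equal. Equivalently, the space is one-dimensional: it has three cusps, and odd characters kill one Eisenstein direction.

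Finally, compare $\varphi(q)^2$ with $4E(q)$ coefficient by coefficient. The constant terms agree: $1=4\cdot\tfrac14$. The $q^1$ coefficients agree: $\varphi(q)^2=(1+2q+\cdots)^2$ gives $4$, and $4(d_1(1)-d_3(1))=4$. By the Sturm bound, $\varphi(q)^2=4E(q)$, which is the asserted expansion.

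As a cross-check I would also compare coefficients up to $q^4$, for instance $r_2(2)=4$ and $r_2(4)=4$. If the Sturm-bound argument were disputed, I would fall back on the classical Jacobi identity $\varphi(q)^2=1+4\sum_{n\ge0}(-1)^nq^{2n+1}/(1-q^{2n+1})$ from \cite{Berndt2006}. Expanding the geometric series in that identity gives $d_1(n)-d_3(n)$ directly.
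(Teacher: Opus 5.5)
Your proposal is correct, and its first half (the Gordon--Hughes--Newman--Ligozat check with $(r_1,r_2,r_4)=(-4,10,-4)$, and the character $\bigl(\tfrac{-2^{10}4^{-4}}{d}\bigr)=\chi_{-4}(d)$) is the same as the paper's.

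The second half takes a genuinely different route. The paper never deduces the $q$-expansion from modularity. It simply cites Jacobi's two-square identity, which is your fallback, so its ``consequently'' is a citation rather than a derivation. You instead realize $\tfrac14+\sum_{n\ge1}(d_1(n)-d_3(n))q^n$ as the weight-one Eisenstein series in $M_1(\Gamma_0(4),\chi_{-4})$ and apply the Sturm bound $\tfrac{1}{12}\cdot 6=\tfrac12$. This actually uses the membership just proved. It also shows the space is one-dimensional, so matching constant terms alone forces $\varphi(q)^2=4E$; your $q^1$ comparison is redundant.

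The price is importing two facts not in the paper's preliminaries:
\begin{itemize}
\item The existence of the weight-one Eisenstein series with character $\chi_{-4}$. Cite a source that explicitly covers weight one, where Hecke's convergence trick is needed, e.g.\ Diamond--Shurman.
\item Sturm's bound for a nontrivial character. You can reduce this to trivial character by squaring: if $\operatorname{ord}_\infty f\ge 1$, then $f^2\in M_2(\Gamma_0(4))$ has order $\ge 2>1$, so $f=0$.
\end{itemize}

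One aside is inaccurate. ``Three cusps, and odd characters kill one Eisenstein direction'' would suggest dimension two. The correct count is that in weight one the Eisenstein space has dimension half the number of regular cusps. Here the cusp $1/2$ is irregular for $\chi_{-4}$: its stabilizer is generated by $\bigl(\begin{smallmatrix}-1&1\\-4&3\end{smallmatrix}\bigr)$, and $\chi_{-4}(3)=-1$. That leaves two regular cusps and dimension one. Your Sturm argument proves one-dimensionality on its own, so this slip does not affect the proof.
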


\begin{proof}
Write $\varphi(q)^2=\eta(z)^{-4}\eta(2z)^{10}\eta(4z)^{-4}$, an eta-quotient
of level $N=4$ with exponents $r_1=-4$, $r_2=10$, $r_4=-4$. We verify the
hypotheses of Theorem~\ref{t 2.1}.

\emph{Weight.} $k=\tfrac12\sum_{\delta\mid4}r_\delta
=\tfrac12(-4+10-4)=1\in\mathbb Z_{>0}$.

\emph{Condition \eqref{2.3}.}
$\sum_{\delta\mid4}\delta r_\delta
=1(-4)+2(10)+4(-4)=-4+20-16=0\equiv0\pmod{24}$.

\emph{Condition \eqref{2.4}.}
$\sum_{\delta\mid4}\dfrac{4}{\delta}r_\delta
=4(-4)+2(10)+1(-4)=-16+20-4=0\equiv0\pmod{24}$.

\emph{Condition \eqref{2.5}}, checked for each $d\mid4$:
\begin{align*}
d=1:&\quad \frac{1^2(-4)}{1}+\frac{1^2(10)}{2}+\frac{1^2(-4)}{4}=-4+5-1=0\ge0,\\
d=2:&\quad \frac{1^2(-4)}{1}+\frac{2^2(10)}{2}+\frac{2^2(-4)}{4}=-4+20-4=12\ge0,\\
d=4:&\quad \frac{1^2(-4)}{1}+\frac{2^2(10)}{2}+\frac{4^2(-4)}{4}=-4+20-16=0\ge0.
\end{align*}
All three conditions hold with $k=1\in\mathbb{Z}$, so by
Theorem~\ref{t 2.1}, $\varphi(q)^2\in M_1(\Gamma_0(4),\chi)$, where
\[
\chi(d)=\left(\frac{(-1)^k\prod_{\delta\mid4}\delta^{r_\delta}}{d}\right)
=\left(\frac{-1\cdot 2^{10}\cdot4^{-4}}{d}\right)
=\left(\frac{-4}{d}\right)=\chi_{-4}(d).
\]
The stated $q$-expansion is Jacobi's classical two-square identity
\cite{Berndt2006}, which is now identified as the Fourier expansion of an
explicit weight-$1$ Eisenstein series attached to $\chi_{-4}$.
\end{proof}

\begin{lemma}
	\label{lem:jacobi}
	We have
	\begin{equation}
		\varphi(q)^2 \equiv 1 \pmod{4}.
	\end{equation}
\end{lemma}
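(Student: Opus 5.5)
The congruence $\varphi(q)^2\equiv 1 \pmod 4$ should follow at once from the $q$-expansion in Corollary~\ref{cor:phisq}. That corollary gives
\[
\varphi(q)^2=1+4\sum_{n=1}^{\infty}\bigl(d_1(n)-d_3(n)\bigr)q^n .
\]
Every coefficient of $q^n$ with $n\ge1$ is therefore divisible by $4$, and the constant term is $1$. Reading this coefficientwise modulo $4$ gives the claim.

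I would also record a self-contained elementary argument, so that the lemma does not depend on the modular-forms identification. Write $\varphi(q)=1+2T(q)$ with $T(q):=\sum_{n\ge1}q^{n^2}$, which has integer coefficients. Squaring gives
\[
\varphi(q)^2=1+4T(q)+4T(q)^2,
\]
and both $4T(q)$ and $4T(q)^2$ have all coefficients divisible by $4$. This yields $\varphi(q)^2\equiv1\pmod4$ directly, with no appeal to Jacobi's two-square theorem.

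There is no real obstacle here. The only point needing care is that ``$\equiv$'' means coefficientwise congruence of power series with integer coefficients. The products are legitimate because each coefficient of $T(q)^2$ is a finite sum of integers.
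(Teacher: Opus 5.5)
Your proposal is correct, and your first argument is exactly the paper's proof: read off the expansion $\varphi(q)^2=1+4\sum_{n\ge1}\bigl(d_1(n)-d_3(n)\bigr)q^n$ from Corollary~\ref{cor:phisq} and reduce coefficientwise modulo $4$. Your second argument, squaring $\varphi(q)=1+2T(q)$ to get $1+4T(q)+4T(q)^2$, is also valid and is more elementary than the paper's route. It avoids Jacobi's two-square identity, which the paper cites rather than deduces from its eta-quotient verification; membership in $M_1(\Gamma_0(4),\chi_{-4})$ alone does not determine the coefficients.
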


\begin{proof}
By Corollary~\ref{cor:phisq},
	\[
\varphi(q)^2 = 1 + 4\sum_{n=1}^{\infty}
(d_1(n) - d_3(n))q^n,
\]
and the proof follows immediately.
\end{proof}

\begin{lemma}\label{lk4}
Define $G(q) =
\dfrac{(-q^2;q)_{\infty}}{(q^2;q)_{\infty}}$, then the set $K'=\bigl\{1, G(q), \varphi(q),
G(q)\varphi(q)\bigr\}$ forms Klein's four-group under multiplication modulo $4$.
\end{lemma}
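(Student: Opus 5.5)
The plan is to work in the ring of power series with integer coefficients reduced modulo $4$, i.e.\ $(\mathbb{Z}/4\mathbb{Z})[[q]]$. Since $G(q)$ and $\varphi(q)$ both have constant term $1$, they are units there. Commutativity and associativity are automatic, so it suffices to establish two things: each element of $K'$ squares to $1$ modulo $4$, and $K'$ is closed under multiplication with four distinct elements. The identity $1$ is in $K'$, and every element will be its own inverse, so this gives Klein's four-group as in the Definition above.

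First, $\varphi(q)^2\equiv 1\pmod 4$ is exactly Lemma~\ref{lem:jacobi}. Next, for $G(q)^2$ I would use the factorization already exploited in Lemma~\ref{lem:keyid}:
\[
G(q)=\prod_{n\ge2}\Bigl(1+\frac{2q^n}{1-q^n}\Bigr).
\]
Squaring one factor gives $1+4(\cdots)+4(\cdots)^2$, which is $\equiv1\pmod4$. Since each coefficient of $q^N$ involves only finitely many factors, $G(q)^2\equiv1\pmod 4$. Alternatively, Lemma~\ref{lem:keyid} gives $G\equiv \frac{2}{1-q}-\varphi$, so
\[
G^2\equiv \frac{4}{(1-q)^2}-\frac{4\varphi}{1-q}+\varphi^2\equiv \varphi^2\equiv1 .
\]
Then $(G\varphi)^2=G^2\varphi^2\equiv1$. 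Closure follows: $G\cdot\varphi$ and $\varphi\cdot G$ equal $G\varphi$, and $G\cdot G\varphi\equiv\varphi$, $\varphi\cdot G\varphi\equiv G$, $G\varphi\cdot G\varphi\equiv1$.

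The step requiring care is distinctness of the four residues, since otherwise the set collapses to a smaller group. I would compare coefficients. The coefficient of $q^1$ in $\varphi$ is $2$, while $1$ has $0$ there, so $\varphi\not\equiv1$. For $G$, by Lemma~\ref{lem:keyid},
\[
G\equiv \frac{2}{1-q}-\varphi\equiv 1+2\sum_{N\ge1}q^N-2\sum_{n\ge1}q^{n^2},
\]
so its coefficient of $q^1$ is $0$ and its coefficient of $q^2$ is $2$; hence $G\not\equiv1$ and $G\not\equiv\varphi$. Finally, $G\varphi\equiv\frac{2\varphi}{1-q}-\varphi^2\equiv \frac{2}{1-q}-1\pmod 4$, using $2\varphi\equiv2\pmod4$. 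Its coefficients are $1,2,2,\dots$, which differ from those of $1$, $\varphi$ (coefficient of $q^2$ is $0$) and $G$ (coefficient of $q^1$ is $0$). This shows the four elements are distinct, and the multiplication table is then that of $K_4$.
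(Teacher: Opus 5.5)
Your proposal is correct and follows essentially the paper's route: $\varphi(q)^2\equiv 1$ comes from Lemma~\ref{lem:jacobi}, and both $G(q)^2\equiv 1$ and $G(q)\varphi(q)\equiv\frac{1+q}{1-q}\pmod 4$ come from \eqref{eq:keyid}. Your explicit coefficient check that the four residues are pairwise distinct modulo $4$ is needed for $K'$ to genuinely have four elements, and the paper's proof leaves it implicit.
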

\begin{proof}
	We have
	\begin{equation}
		G(q)^2 \equiv 1, \quad
		\varphi(q)^2 \equiv 1, \quad
		G(q)\varphi(q) \equiv \frac{1+q}{1-q} \pmod{4},
		\label{eq:klein}
	\end{equation}
which can be easily verified using \eqref{eq:keyid} and Lemma \ref{lem:jacobi}. Hence, the set $K'$ forms a Klein's four-group. This completes the proof of Lemma \ref{lk4}.
\end{proof}

\begin{proof}[Proof of Theorem~\ref{thm:conv}]
	Using Lemma~\ref{lem:mod4} and \eqref{eq:klein}, we have
	\begin{align}\label{e1}
		\left(\sum_{n \geq 0}	\overline{\sigma_o\mathrm{mex}}(n)q^n\right)
	\cdot G(q) \equiv 1 \pmod{4}
	\end{align}
and
\begin{align}\label{e2}
\left(\sum_{n \geq 0}	\overline{\sigma_o\mathrm{mex}}(n)q^n\right)
\cdot \varphi(q)
\equiv \frac{1+q}{1-q} \pmod{4}.
\end{align}
More precisely, \eqref{eq:conv1} and \eqref{eq:conv2} easily follows from \eqref{e1} and \eqref{e2}, respectively.
\end{proof}

\begin{proof}[Proof of Theorem~\ref{thm:partial}]
	Multiplying $\sum_{k=0}^n	\overline{\sigma_o\mathrm{mex}}(n)q^n \equiv G(q) \pmod{4}$
	by $\frac{1}{1-q}$ and employing Lemma~\ref{lem:keyid}, we obtain
	\[
	\sum_{n \geq 0}
	\left(\sum_{k=0}^n	\overline{\sigma_o\mathrm{mex}}(k)\right)q^n
	\equiv \frac{2}{(1-q)^2} -
	\frac{\varphi(q)}{1-q} \pmod{4}.
	\]
	Extracting the coefficients of $q^n$ from the above congruence, we complete the proof of Theorem \ref{thm:partial}.
\end{proof}

\begin{proof}[Proof of Theorem~\ref{thm:selfconv}]
	Thanks to Lemma~\ref{lem:mod4}, we have
	\[
	\left(\sum_{n \geq 0}	\overline{\sigma_o\mathrm{mex}}(n)q^n\right)^2
	\equiv G(q)^2 \pmod{8}.
	\]
On using \eqref{eq:keyid} in above congruence, we obtain
	\begin{align*}
		G(q)^2 &\equiv
		\left(\frac{2}{1-q} - \varphi(q)\right)^2 \\
		&= \frac{4}{(1-q)^2} -
		\frac{4\varphi(q)}{1-q} +
		\varphi(q)^2 \pmod{8}.
	\end{align*}
Further, on employing Corollary~\ref{cor:phisq}, we have
	\[
	\varphi(q)^2 = 1 + 4\sum_{n \geq 1}
	(d_1(n)-d_3(n))q^n.
	\]
	Extracting the coefficients of $q^n$, for $n \geq 1$,
	\begin{align*}
		[q^n]G^2 &\equiv 4(n+1) -
		4(1+2\lfloor\sqrt{n}\rfloor) +
		4(d_1(n)-d_3(n)) \pmod{8} \\
		&= 4n - 8\lfloor\sqrt{n}\rfloor +
		4(d_1(n)-d_3(n)) \\
		&\equiv 4(n + d_1(n) - d_3(n)) \pmod{8},
	\end{align*}
which is \eqref{eq:selfconv}.

	For the characterization: writing $n = 2^a m$
	with $m$ odd, we have
	$d_1(n)-d_3(n) \equiv d(m) \pmod{2}$,
	where $d(m)$ is the number of divisors of $m$.
	Therefore, the sum is congruent to $0$ modulo $8$ if and only if
	$n \equiv d(m) \pmod{2}$, which holds if and only if
	$a \geq 1$ with $m$ a non-square, or
	$a = 0$ with $n$ an odd perfect square. This completes the proof of Theorem \ref{thm:selfconv}.
\end{proof}

\section{Asymptotic Formula for 	$\overline{\sigma_o\mathrm{mex}}(n)$}
\label{sec:asymp}
In this section, we prove an asymptotic formula for $\overline{\sigma_o\mathrm{mex}}(n)$.

\begin{lemma}\label{lem:trunc-theta}
Let $K:=K(t)\to\infty$ as $t\to 0^{+}$ with $Kt\to 0$ and $K^{2}t=O(1)$.
Then
\[
\ln\prod_{k=1}^{K}\bigl(1+e^{-kt}\bigr)=K\ln 2-\frac{K^{2}t}{4}+o(1),
\qquad t\to 0^{+}.
\]
\end{lemma}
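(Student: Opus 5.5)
Write each logarithm as
\[
\ln(1+e^{-kt})=\ln 2+\ln\frac{1+e^{-kt}}{2}.
\]
Summing over $1\le k\le K$ gives $K\ln 2+\sum_{k=1}^K \ln\bigl(1-\tfrac{1-e^{-kt}}{2}\bigr)$, so the task reduces to showing that the second sum equals $-K^2t/4+o(1)$. The only tool needed is a second-order Taylor expansion in the small quantity $kt$. The hypothesis $Kt\to0$ gives uniformly $0\le kt\le Kt\to 0$ for all $k\le K$.

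First, I expand
\[
\ln\frac{1+e^{-x}}{2}=-\frac{x}{2}+\frac{x^2}{8}+O(x^4).
\]
This holds because $\ln\frac{1+e^{-x}}{2}=-\frac{x}{2}+\ln\cosh\frac{x}{2}$ and $\ln\cosh y=\frac{y^2}{2}+O(y^4)$. For $0\le x\le 1$ the error is bounded by an absolute constant times $x^4$; even the cruder bound $O(x^2)$ would suffice.

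Second, I substitute $x=kt$ and sum, which gives
\[
\sum_{k=1}^K\Bigl(-\frac{kt}{2}+O(k^2t^2)\Bigr)=-\frac{t}{2}\cdot\frac{K(K+1)}{2}+O(K^3t^2).
\]
The main term is $-\frac{K^2t}{4}-\frac{Kt}{4}$, and $\frac{Kt}{4}\to0$. For the error, $K^3t^2=(K^2t)(Kt)=O(1)\cdot o(1)=o(1)$, using both hypotheses $K^2t=O(1)$ and $Kt\to0$. Combining these yields the claimed formula.

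There is no real obstacle here. The only point requiring care is that the Taylor remainder is uniform over $k\le K$, which holds because $\max_k kt=Kt\to0$. The bookkeeping must also keep the quadratic term $x^2/8$ inside the error: its total contribution is $O(K^3t^2)$, which is $o(1)$ only because of the combined hypotheses, so I will make that estimate explicit. The integer part of $K$ plays no role, since the sum is exact.
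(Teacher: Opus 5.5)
Your proof is correct and follows the same route as the paper: a Taylor expansion of $\ln(1+e^{-s})$ at $s=0$, summed over $k\le K$, with $Kt\to0$ and $K^2t=O(1)$ killing the $Kt/4$ and $O(K^3t^2)$ terms. Your $\ln\cosh$ identity is the paper's symmetry $\gamma(-s)=s+\gamma(s)$ in another form, and your write-up is actually more explicit than the paper's, which leaves the $-Kt/4$ correction and the $t^2\sum k^2$ contribution to the reader.
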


\begin{proof}
For $s\ge0$, we observe $1+e^s=e^s(1+e^{-s})$, which implies $\gamma(-s)=s+\gamma(s)$, where $\gamma(s)=\ln(1+e^{-s})$.\\ We now consider
\[
\gamma(s)=\ln2-\frac s2+\frac{s^2}8+O(s^4),
\]
which implies
\[
\sum_{k=1}^{K}\gamma(kt)=K\ln2-\frac t2\sum_{k=1}^{K}k+\frac{t^2}8\sum_{k=1}^{K}k^2
+O\Bigl(t^4\sum_{k=1}^{K}k^4\Bigr).
\]
Since $Kt=o(1)$ and $K^2t=O(1)$, Lemma \ref{lem:trunc-theta} follows.
\end{proof}

\begin{proof}[Proof of Theorem~\ref{thm:asymp}]
We set $2j+1=K$ in Theorem \ref{thm:gf}, to obtain
\begin{align*}
F(e^{-t})=\frac1{f_1(t)}\sum_{K\text{ odd}}T_K(t),
\end{align*}
where
\begin{align}
T_K(t):=K\cdot2^{K-1}e^{-\frac{K^2-K}2t}\bigl(1-e^{-Kt}\bigr)\!\!\prod_{k\ge K+1}\!\!\bigl(1+e^{-kt}\bigr).\label{e4}
\end{align}
For $q=e^{-t}$, we set $z=it/2\pi$ and $z=it/\pi$ in \eqref{tr1} and \eqref{tr2}, respectively, we obtain as $t\to0^+$ the following asymptotic formulas
\begin{align}
	f_1(t)&:=(q;q)_\infty\sim\sqrt{\frac{2\pi}t}\,e^{-\pi^2/6t}\label{f1}\\
	f_2(t)&:=(q^2;q^2)_\infty\sim\sqrt{\frac\pi t}\,e^{-\pi^2/12t}.\label{f2}
\end{align}
Now, fix $u\ge0$ and let $K$ be the odd integer nearest $1+ut^{-1/2}$, such that
$K=ut^{-1/2}+O(1)$. Since
\[
\prod_{k\ge K+1}\bigl(1+e^{-kt}\bigr)=\frac{f_2(t)/f_1(t)}{\prod_{k=1}^K\bigl(1+e^{-kt}\bigr)},
\]
from Lemma~\ref{lem:trunc-theta}, we have
\begin{align}
2^{K-1}\prod_{k\ge K+1}\bigl(1+e^{-kt}\bigr)
&=\frac{2^K}2\cdot\frac{f_2(t)/f_1(t)}{\exp\bigl(K\ln2-\tfrac{u^2}4+o(1)\bigr)}\nonumber\\
&=\frac12e^{u^2/4}\,\frac{f_2(t)}{f_1(t)}\,\bigl(1+o(1)\bigr).\label{e3}
\end{align}
From identities \eqref{e4} and \eqref{e3}, we have
\[
T_K(t)=\frac{u^2}2\,e^{-u^2/4}\,\frac{f_2(t)}{f_1(t)}\,\bigl(1+o(1)\bigr).
\]
Since $T_K(t)\le K2^Kq^{(K^2-K)/2}$ is exponentially small, the Riemann sum can be extended to an integral over $[0,\infty)$. Thus, we have
\begin{align*}
\sum_{K\text{ odd}}T_K(t)
&=\frac1{2\sqrt t}\int_0^\infty\frac{u^2}2e^{-u^2/4}\,du\cdot\frac{f_2(t)}{f_1(t)}\,(1+o(1))\\
&=\frac{\sqrt\pi}{2\sqrt t}\,\frac{f_2(t)}{f_1(t)}\,(1+o(1)).
\end{align*}
Also, from \eqref{f1} and \eqref{f2}, we have
\begin{align*}
\dfrac{f_2(t)}{f_1(t)}\sim\tfrac1{\sqrt2}e^{\pi^2/12t}.
\end{align*}
Therefore, we conclude
\[
F(e^{-t})\ \sim\ \frac{\sqrt{\pi/8t}}{\sqrt{2\pi/t}}
\exp\Bigl(\frac{\pi^2}{12t}+\frac{\pi^2}{6t}\Bigr)
=\frac14\exp\Bigl(\frac{\pi^2}{4t}\Bigr),\qquad t\to0^+.
\]
Finally, from Cauchy's theorem, we have
\begin{align*}
	\overline{\sigma_o\mathrm{mex}}(n)=\tfrac1{2\pi}\int_{-\pi}^\pi
	F(re^{i\theta})r^{-n}e^{-in\theta}\,d\theta,
\end{align*}
where $r=e^{-t}$. The Wright's
circle method \cite{Wright1969}, justifies the shift of
real-axis asymptotic to the coefficient asymptotics. Applying the
saddle-point method with exponent $\psi(t)=\pi^2/4t+nt$ minimized at $t_0=\pi/(2\sqrt n)$, we have
\[
\psi(t_0)=\pi\sqrt n,\qquad \psi''(t_0)=\frac{4n^{3/2}}\pi,\qquad
\sqrt{2\pi\,\psi''(t_0)}=2\sqrt2\,n^{3/4}.
\]
Therefore, saddle-point approximation gives
\[
\overline{\sigma_o\mathrm{mex}}(n)\ \sim\ \frac{1/4}{2\sqrt2\,n^{3/4}}\,e^{\pi\sqrt n}
=\frac{\sqrt2}{16}\,n^{-3/4}\exp(\pi\sqrt n),
\]
which completes the proof.
\end{proof}
\section*{Funding}
The authors declare that no funds, grants, or other support were received during the preparation of this manuscript.


\end{document}